\def\MR#1{\href{http://www.ams.org/mathscinet-getitem?mr=#1}{MR#1}}
\def\real{\hbox{\rm\setbox1=\hbox{I}\copy1\kern-.45\wd1 R}}
\def\natural{\hbox{\rm\setbox1=\hbox{I}\copy1\kern-.45\wd1 N}}
\newtheorem{theorem}{Theorem}[section] % 1st argument is your name for it
\newtheorem{lemma}[theorem]{Lemma}     % 2nd argument is what is printed
\newtheorem{corollary}[theorem]{Corollary}
\newtheorem{case}[theorem]{Case}
\newtheorem{property}[theorem]{Property}
\title{Tower multiplexing and slow weak mixing} % This is the full title of the paper
\author{Terrence M. Adams
\thanks{
Terrence M. Adams is with the Department of Defense, 
9161 Sterling Drive 
Laurel, MD 20723
United States of America}}
\date{\today}
\begin{document}
\maketitle

\begin{abstract}
A technique is presented for multiplexing two ergodic measure preserving transformations together to derive a third limiting transformation.  This technique is used to settle a question regarding rigidity sequences of weak mixing transformations.  Namely, given any rigidity sequence for an ergodic measure preserving transformation, there exists a weak mixing transformation which is rigid along the same sequence. 
This establishes a wide range of rigidity sequences 
for weakly mixing dynamical systems. 
\end{abstract}
\section{Introduction}
Fix a Lebesgue probability space. 
Endow the set of invertible measure preserving transformations with the weak topology. 
It is well known that both the properties of weak mixing and rigidity are generic properties in this topological space \cite{Hal56}. 
This is interesting since the behaviors of these two properties contrast greatly. Weak mixing occurs when a system 
equitably spreads mass throughout the probability space for most times. Rigidity occurs when a system 
evolves to resemble the identity map infinitely often. Since both of these behaviors exist simultaneously 
in a large class of transformations, it is natural to ask what types of rigidity sequences are realizable 
by weak mixing transformations. Here we resolve this question to show all rigidity sequences are realizable 
by the class of weak mixing transformations. 
\begin{theorem}
\label{towerplex0}
Given any ergodic measure preserving transformation $R$ on a Lebesgue 
probability space, and any rigid sequence $\rho_n$ for $R$, there exists 
a weak mixing transformation $T$ on a Lebesgue probability space such that 
$T$ is rigid on $\rho_n$. 
\end{theorem}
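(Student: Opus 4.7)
The plan is to construct $T$ as a limit of Rokhlin tower transformations that simultaneously encode two competing behaviors: approximate identity at times $\rho_n$, to inherit rigidity from $R$, and the mixing profile of a fixed weakly mixing transformation $S$, to inject weak mixing into $T$. Fix any weakly mixing $S$ on a Lebesgue probability space, which exists by the Halmos genericity already cited. The name \emph{tower multiplexing} presumably refers to an interleaving, at each stage of a cutting-and-stacking construction, of an $R$-driven block of height $\rho_n$ with an $S$-driven block, arranged so that both desired limit properties can be extracted.

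Concretely, at stage $n$ I would invoke Rokhlin's lemma to obtain a tower $\mathcal{F}_n$ for $R$ of height $\rho_n$ filling all but $\varepsilon_n$ of the space, and use rigidity of $R$ along $\rho_n$ to arrange that $R^{\rho_n}$ is close to the identity on the base relative to a chosen refining partition. I would also take a short tower segment $\mathcal{G}_n$ for $S$. The finite tower $\mathcal{T}_n$ approximating $T$ is then built by concatenating copies of $\mathcal{F}_n$ separated by short $\mathcal{G}_n$-blocks, in proportions chosen so that (i) the total relative mass of the $\mathcal{G}_n$-blocks tends to $0$ so that $T^{\rho_n}$ still acts nearly as the identity, while (ii) the $\mathcal{G}_n$-blocks are spaced densely enough for the weak mixing of $S$ to propagate to the limit. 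Taking an inverse limit of a refining sequence of such towers produces a measure-preserving $T$.

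The rigidity assertion $\mu(T^{\rho_n} A \triangle A) \to 0$ should follow almost by construction: on the bulk of the space $T^{\rho_n}$ acts as a level shift inside a copy of $\mathcal{F}_n$, and a careful choice of $\varepsilon_n$ and of the $\mathcal{G}_n$-mass makes this shift agree with the identity on a set of measure tending to $1$ against any prescribed refining partition. The weak mixing of $T$ is the genuinely hard step: one must show that no non-constant eigenfunction $f$ with $Tf=\lambda f$, $|\lambda|=1$, can persist, and this has to be deduced from the action on the sparse $\mathcal{G}_n$-blocks, where long orbit segments of $T$ behave as long orbit segments of $S$.

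The main obstacle will be balancing these two ingredients. Allocating too much of each tower to $R$-blocks preserves rigidity trivially but may leave room for non-trivial eigenvalues to survive the construction; allocating too much to $S$-blocks destroys the near-identity action at time $\rho_n$ and kills rigidity. I expect the heart of the argument to be a quantitative schedule prescribing the relative tower measures as a function of $n$, combined with a spectral argument (applied to the $\mathcal{G}_n$-contributions) showing that these sparse $S$-like insertions still suffice to eliminate every non-trivial eigenvalue of $T$.
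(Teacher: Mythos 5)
There is a genuine gap: your sketch correctly identifies the tension (mostly-$R$ for rigidity, a vanishing amount of $S$ for weak mixing) but leaves the eigenvalue-killing step as an unproved hope, and that is exactly where the content of the theorem lies. Saying the sparse $S$-blocks are ``spaced densely enough for the weak mixing of $S$ to propagate to the limit'' begs the question; weak mixing does not pass to limits in any soft way, especially when the $S$-influenced region has measure tending to $0$. The paper supplies two concrete devices you are missing. First, the swapped subcolumns $C_n$ have relative measures $r_n$ with $\sum_n r_n = \infty$ and are arranged to be nearly independent of the union of the earlier ones, so a Borel--Cantelli-type argument shows almost every point lies in infinitely many $C_n$; this is what guarantees that every candidate eigenfunction is ``tested'' by the $S$-dynamics infinitely often on a full-measure set even though $\mu(Y_n)\to 0$. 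Second, $S$ is not a generic weakly mixing map but Chacon's transformation, chosen because $\mu(S^{H_n}A\cap B)\to \tfrac12\bigl(\mu(A\cap B)+\mu(S^{-1}A\cap B)\bigr)$; this simultaneous partial rigidity at the two \emph{consecutive} times $H_{m_n}$ and $H_{m_n}+1$ forces $|\lambda^{H_{m_n}}-1|$ and $|\lambda^{H_{m_n}+1}-1|$ to both be small, hence $|\lambda-1|$ small, hence $\lambda=1$. Without some such mechanism your plan cannot rule out nontrivial eigenvalues.

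A secondary problem is your architecture. You propose Rokhlin towers for $R$ of height $\rho_n$ and then concatenate $R$-blocks with $S$-blocks into a single stack. A tower of height $\rho_n$ does not encode rigidity along $\rho_n$ (rigidity is a statement about $\mu(R^{\rho_n}A\triangle A)$, not about tower structure), and once you splice $S$-blocks between copies of the $R$-tower the resulting map is no longer isomorphic to $R$, so you cannot simply invoke the rigidity of $R$ at time $\rho_n$ for the limit; the inserted blocks shift all return times. The paper avoids this by keeping $R_n$ and $S_n$ acting on \emph{disjoint} regions $X_n$ and $Y_n$ and exchanging thin subcolumns via measure-rescaling isomorphisms $\tau_n,\psi_n$, so that $R_{n+1}$ is genuinely isomorphic to $R$ at every stage; rigidity of $T$ along $\rho_N$ for $M_n^1<N\le M_{n+1}^1$ then follows from rigidity of $R_n$ together with a quantitative approximation lemma controlling $\mu(R_{n+1}^iA\triangle R_n^iA)$ and the measure of the set where $T\ne T_{n+1}$. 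You would need to rebuild both of these ingredients before your outline becomes a proof.
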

Prior to proving this main result, we present a new and direct method for combining two invertible ergodic finite measure preserving transformations to obtain a third limiting transformation.  The technique iteratively utilizes the Kakutani-Rokhlin lemma (\cite{Kak43},\cite{Rok48}). A measure preserving transformation $T$ on a separable probability space $(X,\mathbb{B},\mu)$ is ergodic if any invariant measurable set $A$ has measure 0 or 1. In particular, $TA=A$ implies $\mu (A)=\mbox{0 or 1}$.
\begin{lemma}{(Kakutani 1943, Rokhlin 1948)}
\label{kr-lemma}
Let $T:X\to X$ be an ergodic measure preserving transformation on a nonatomic probability space $(X, \mathcal{B}, \mu )$, $h$ a positive integer and $\epsilon >0$. There exists a measurable set $B\subset X$ such that 
$B,TB,\ldots ,T^{h-1}B$ are pairwise disjoint and 
$\mu (\bigcup_{i=0}^{h-1}T^i(B))>1-\epsilon$. The collection $\{B, TB,\ldots ,T^{h-1}B\}$ is referred to as a Rokhlin tower 
of height $h$ for transformation $T$. 
\end{lemma}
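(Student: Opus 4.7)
The plan is to build $B$ as the base of a Rokhlin tower carved out of a Kakutani skyscraper over a small set. First I would use nonatomicity of $(X,\mathcal{B},\mu)$ to choose a measurable set $A$ with $0 < \mu(A) < \epsilon/h$. For $x \in A$, define the first return time $r_A(x) = \inf\{n \geq 1 : T^n x \in A\}$; Poincar\'e recurrence (a consequence of measure preservation) guarantees $r_A < \infty$ almost everywhere on $A$, so up to a null set $A$ decomposes as $\bigsqcup_{k \geq 1} A_k$ with $A_k = \{x \in A : r_A(x) = k\}$. The disjoint collection $\{T^i A_k : k \geq 1,\ 0 \leq i < k\}$ is the Kakutani skyscraper, and ergodicity forces it to have full measure: the set of points whose forward orbit avoids $A$ is $T$-invariant and disjoint from $A$, so has measure zero.

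Next I would define the Rokhlin base by stacking length-$h$ subcolumns inside each sufficiently tall column of the skyscraper,
\[
B \;=\; \bigsqcup_{k \geq h}\;\bigsqcup_{j=0}^{\lfloor k/h \rfloor - 1} T^{jh} A_k.
\]
By construction each $T^i B$ with $0 \leq i \leq h-1$ occupies a distinct level inside every column, so $B, TB, \ldots, T^{h-1}B$ are pairwise disjoint.

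Finally I would estimate the missing mass. Inside a column of height $k$, the set $\bigcup_{i=0}^{h-1} T^i B$ covers the bottom $h\lfloor k/h \rfloor$ levels and misses the top $k - h\lfloor k/h \rfloor \leq h-1$ levels (the entire column when $k < h$). Summing missed heights across columns yields total missed measure at most $(h-1)\mu(A)$, and since the skyscraper has full measure the complement of $\bigcup_{i=0}^{h-1}T^i B$ in $X$ has measure at most $(h-1)\mu(A) < \epsilon$. The only step requiring a genuinely dynamical input is the full-measure claim for the skyscraper via ergodicity; the rest is elementary bookkeeping with column heights, and I expect that step to be the main (though standard) obstacle.
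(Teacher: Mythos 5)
The paper does not prove this lemma; it is quoted as a classical result of Kakutani and Rokhlin, so there is no internal proof to compare against. Your argument is the standard and correct one: build the Kakutani skyscraper over a set $A$ with $\mu(A)<\epsilon/h$, use ergodicity to show the skyscraper is conull, and stack length-$h$ blocks in each column, losing at most $h-1$ levels per column and hence at most $(h-1)\mu(A)<\epsilon$ in total. The only point worth tightening is the invariance step: the set $E$ of points whose forward orbit avoids $A$ satisfies $E\subseteq T^{-1}E$ and is therefore invariant only modulo null sets (via measure preservation), and since the skyscraper's complement is naturally described by \emph{backward} orbits avoiding $A$ (for the invertible $T$ used in this paper), one should either run the recurrence argument in the backward direction or note that both versions are null for an invertible ergodic transformation; this is routine and does not affect correctness.
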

Clearly, this lemma demonstrates that any ergodic measure preserving 
transformation can be approximated arbitrarily well by periodic 
transformations in an appropriate topology (i.e. uniform topology). 
See Halmos \cite{Hal44}, \cite{Hal56}, Rokhlin \cite{Rok49}, 
Katok and Stepin \cite{KS67}. 
Much of the early work in this regard focuses on the 
topological genericity of specific properties of measure 
perserving transformations. 
In \cite{KS67}, results are presented on rates of approximation by periodic 
transformations, and connections with dynamical properties. 
Recent research of Kalikow demonstrates the utility of developing a general 
theory of Rokhlin towers \cite{Kal12}. 
Also, it is clear from the Kakutani-Rokhlin lemma that 
any ergodic measure preserving tranformation can be approximated 
arbitrarily well by another ergodic measure preserving transformation 
from any isomorphism class. This observation is utilized repeatedly 
in this work. 

Two input transformations $R$ and $S$ are multiplexed together to derive 
an output transformation $T$ with prescribed properties.  The multiplexing operation 
is defined using an infinite chain of measure theoretic isomorphisms. 
In the case where $R$ is ergodic and rigid, and $S$ weak mixing, we present 
a method for unbalanced multiplexing of $R$ and $S$.  
Over time, transformations isomorphic to $R$ are used 
on a higher proportion of the measure space, as the action 
by $S$ dissipates over time. 
We refer to this process informally as slow weak mixing. 

A measure preserving transformation $T:X\to X$ 
is weak mixing, if for all measurable sets $A$ and $B$, 
$$
\lim_{n\to \infty} \frac1n \sum_{i=0}^{n-1}\vert 
\mu (T^iA\cap B) - \mu (A)\mu (B)\vert =0.
$$
Clearly, if $T$ is weak mixing, then $T$ is ergodic. 
Also, $T$ is weak mixing, if and only if $T$ has only 1 
as an eigenvalue, and all eigenfunctions are constants 
almost everywhere.
%\footnote{For the remainder of this paper, 
%statements of equality are assumed to hold almost everywhere.}
An ergodic measure preserving transformation $R$ is rigid 
on a sequence $\rho_n\to \infty$, if for any measurable set $A$, 
$$\lim_{n\to \infty}\mu (T^{\rho_n}A\triangle A)=0.$$
The sequence $\rho_n$ is called a rigidity sequence for $R$. 

Several forms of rigidity have been studied in both ergodic theory 
and topological dynamics. In the case of topological dynamics, 
both rigidity and uniform rigidity are considered. Uniform rigidity 
was introduced in \cite{GlaMao89} and given a specific generic characterization. 
In \cite{Jam09}, it is shown the notion of uniform rigidity is mutually 
exclusive from measurable weak mixing on a Cantor set. 
In particular, every finite measure preserving weak mixing 
transformation has a representation that is not uniformly rigid. 
Weak mixing and rigidity have been studied for interval exchange transformations. 
See \cite{Cha12} and \cite{AviFor04} for recent results in this regard. 
Rigid, weak mixing transformations have been studied in the setting of 
infinite measure preserving transformations, as well as nonsingular transformations. 
Mildly mixing transformations are finite measure preserving transformations 
that do not contain a rigid factor. These are the transformations which yield 
ergodic products with any infinite ergodic transformation \cite{FurWei78}. 
See works \cite{Aar79}, \cite{AarHosLem12}, \cite{BdJLR} and the references 
therein for results related to notions of weak mixing and rigidity 
for infinite measure preserving or nonsingular transformations. 
The notion of IP-sequences was introduced 
by Furstenberg and Weiss in connection with rigid transformations. 
There has been recent research on IP-rigidity sequences (i.e. IP-sequences which 
form a rigid sequence) for weak mixing transformations. 
See \cite{BdJLR} and \cite{Gri12} for results on IP-rigidity. 

The notion of rigidity was extended to $\alpha$-rigidity by Friedman \cite{Fri89}. 
Transformations are constructed which are 
$\alpha$-rigid and $(1-\alpha)$-partial mixing for any $0<\alpha < 1$. 
See \cite{FriGabKin88} and \cite{Abd09} for further research on 
$\alpha$-rigid transformations. Many of these notions have been studied 
for more general group actions. See \cite{BerGor05} for a survey of 
weak mixing group actions.  Since our results depend mainly on the use 
of Lemma \ref{kr-lemma} which extends to more general groups 
(i.e. amenable, abelian), there should exist an extension of techniques 
provided in this work to a wider class of groups. 
Since some of the principles provided in this work appear new, we 
focus exclusively on the case of measure preserving 
$\mathbb{Z}$-actions on $[0,1)$ with Lebesgue measure. 

For a recent comprehensive account on rigidity sequences, 
we recommend recent publications \cite{BdJLR} and \cite{EisGri}.
Both of these works provide much detail on the current understanding of rigidity 
for weak mixing transformations. 

%Observe that any discrete spectrum transformation admits rigidity sequences 
%whose density approaches zero arbitrarily slow. Given a fixed discrete spectrum 
%transformation and a fixed weak mixing transformation, then there will always 
%exist a zero density sequence which is rigid for the discrete spectrum transformation, 
%but mixing for the weak mixing transformation. However, the results in this paper 
%demonstrate that if the rigidity sequence is fixed, then there exists a weak mixing 
%transformation with the same rigidity sequence. 

\section{Towerplex Constructions}
The main result is established constructively using Lemma \ref{kr-lemma}. Given two transformations $R$ and $S$, we define a third transformation $T$ which is constructed as a blend of $R$ and $S$, such that $T$ acts more like $R$, asymptotically. We will define a sequence of positive integers $h_n, n\in \natural$, and a sequence of real numbers $\epsilon_n >0$ such that 
$\sum_{n=1}^{\infty}{1}/{h_n} < \infty$ and $\sum_{n=1}^{\infty}\epsilon_n < \infty$. Also, let $r_n$ and $s_n$ 
for $n\in \natural$ be sequences of real numbers satisfying: 
$0\leq r_n, s_n\leq 1$. 
\subsection{Initialization}
Suppose $R$ and $S$ are ergodic measure preserving transformations defined 
on a Lebesgue probability space $(X,\mu,\mathbb{B})$. Partition $X$ into two equal 
sets $X_1$ and $Y_1$ (i.e. $\mu (X_1)=\mu (Y_1)=1/2$). 
Initialize $R_1$ isomorphic to $R$ and $S_1$ isomorphic to $S$ to operate on $X_1$ and $Y_1$, respectively. Define $T_1(x)=R_1(x)$ for $x\in X_1$ and $T_1(x)=S_1(x)$ for $x\in Y_1$.  Produce Rohklin towers of height $h_1$ with residual less than $\epsilon_1/2$ for each of $R_1$ and $S_1$. In particular, let $I_1, J_1$ be the base of the $R_1$-tower and $S_1$-tower such that $\mu (\bigcup_{i=0}^{h_1-1}R_1^iI_1)>1/2(1-\epsilon_1)$ and 
$\mu (\bigcup_{i=0}^{h_1-1}S_1^iJ_1)>1/2(1-\epsilon_1)$.  
Let 
$X_1^*=X_1\setminus \bigcup_{i=0}^{h_1-1}R_1^i(I_1)$ 
and 
$Y_1^*=Y_1\setminus \bigcup_{i=0}^{h_1-1}S_1^i(J_1)$ be the 
residuals for the $R_1$ and $S_1$ towers, respectively. 
Choose $I^{\prime}_1\subset I_1$ and $J^{\prime}_1\subset J_1$ such that 
$$
\mu (I^{\prime}_1)=r_1\mu (I_1)\mbox{ and }\mu (J^{\prime}_1)=s_1\mu (J_1).
$$
%Define an invertible map $\phi_1:I^{\prime}_1 \to J^{\prime}_1$ such that 
%$\phi_1$ is measure preserving between subspaces 
%$(I^{\prime}_1,\mathbb{B}\cap I^{\prime}_1,\frac{\mu}{\mu (I^{\prime}_1)})$ and 
%$(J^{\prime}_1,\mathbb{B}\cap J^{\prime}_1,\frac{\mu}{\mu (J^{\prime}_1)})$. 
%Extend $\phi_1(x)=S_1^{i}\circ \phi_1\circ R_1^{-i}(x)$ for $x\in R_1^{i}I^{\prime}_1$ and $0\leq i<h_1$. 
%$\phi_1$ is a switching map between subcolumns 
%$\{I^{\prime}_1,R_1I^{\prime}_1,\ldots ,R_1^{h_1-1}I^{\prime}_1\}$
% and 
%$\{J^{\prime}_1,S_1J^{\prime}_1,\ldots ,S_1^{h_1-1}J^{\prime}_1\}$. 
Set $X_2=X_1\setminus [\bigcup_{i=0}^{h_1-1}R_1^i(I_1^{\prime})]\cup [\bigcup_{i=0}^{h_1-1}S_1^i(J_1^{\prime})]$ and 
$Y_2=Y_1\setminus [\bigcup_{i=0}^{h_1-1}S_1^i(J_1^{\prime})]\cup [\bigcup_{i=0}^{h_1-1}R_1^i(I_1^{\prime})]$. 
We will define second stage transformations $R_2:X_2\to X_2$ and $S_2:Y_2\to Y_2$. 
First, it may be necessary to add or subtract measure from
 the residuals so that $X_2$ is scaled properly to define $R_2$, and 
 $Y_2$ is scaled properly to define $S_2$. 
% Let 
% $X_1^*=X_1\setminus \bigcup_{i=0}^{h_1-1}R_1^i(I_1\setminus I^{\prime}_1)$ 
% and 
% $Y_1^*=Y_1\setminus \bigcup_{i=0}^{h_1-1}S_1^i(J_1\setminus J^{\prime}_1)$. 

\subsection{Tower Rescaling}
In the case where $\mu (I_1^{\prime})\neq \mu (J_1^{\prime})$, we give 
a procedure for transferring measure between the towers and the 
residuals. This is done in order to consistently define $R_2$ and $S_2$ 
on the new inflated or deflated towers. 
Let $a=\mu (\bigcup_{i=0}^{h_1-1}R_1^iI_1)$ and 
$b=h_1(\mu (J^{\prime}_1)-\mu (I^{\prime}_1))$. 
Let $c$ be the scaling factor and $d$ the amount of measure 
transferred between $\bigcup_{i=0}^{h_1-1}S_1^i(J^{\prime}_1)$ 
and $X_1^*$. Thus, $a+b-d=ca$ and $1/2-a+d=c(1/2 - a)$. The goal is 
to solve two unknowns $d$ and $c$ in terms of the other values. 
Hence, $d=(1-2a)b$ and $c=1+2b$. 

\subsubsection{$R$ Rescaling}
If $d>0$, define $I_1^*\subset J^{\prime}_1$ such that 
$\mu (I_1^*)=d/h_1$. Let 
$X_1^{\prime}=X_1^*\cup (\bigcup_{i=0}^{h_1-1}R_1^i(I_1^*))$. 
If $d=0$, set $X_1^{\prime}=X_1^*$.  
If $d<0$, transfer measure from $X_1^*$ to the tower.  
Choose disjoint sets $I_1^*(0),I_1^*(1),\ldots ,I_1^*(h_1-1)$ 
contained in $X_1^*$ such that $\mu (I_1^*(i))=d/h_1$. 
Denote $I_1^*=I_1^*(0)$. 
Begin by defining $\mu$ measure preserving map $\alpha_1$ such 
that $I_1^*(i+1)=\alpha_1 (I_1^*(i))$ for $i=0,1,\ldots ,h_1-2$. 
In this case, let 
$X_1^{\prime}=X_1^*\setminus [\bigcup_{i=0}^{h_1-1}I_1^*(i)]$. 

\subsubsection{$S$ Rescaling}
The direction mass is transferred depends on the sign of $b$ above. 
If $d>0$, then $\mu (J^{\prime}_1) > \mu (I^{\prime}_1)$ and 
mass is transferred from the residual $Y_1^*$ to the 
$S_1$-tower. 
Choose disjoint sets $J_1^*(0),J_1^*(1),\ldots ,J_1^*(h_1-1)$ 
contained in $Y_1^*$ such that $\mu (J_1^*(i))=d/h_1$. 
Denote $J_1^*=J_1^*(0)$. 
Begin by defining $\mu$ measure preserving map $\beta_1$ such 
that $J_1^*(i+1)=\beta_1 (J_1^*(i))$ for $i=0,1,\ldots ,h_1-2$. 
In this case, let 
$Y_1^{\prime}=Y_1^*\setminus [\bigcup_{i=0}^{h_1-1}J_1^*(i)]$. 
If $d=0$, set $Y_1^{\prime}=Y_1^*$. 
If $d<0$, transfer measure from the $S_1$-tower to the residual $Y_1^*$. 
Define $J_1^*\subset J_1\setminus J^{\prime}_1$ such that 
$\mu (J_1^*)=d/h_1$. Let 
$Y_1^{\prime}=Y_1^*\cup (\bigcup_{i=0}^{h_1-1}S_1^i(J_1^*))$. 

%The discriminant value $d$ determines in which direction mass is transferred 
%between the tower and its associated residual, and also determines the 
%amount of mass to be transferred. This is necessary to iteratively 
%continue invoking transformations isomorphic to $R$. 
%Note if $d>0$ and mass is transferred between the $R_1$-tower and its residual, 
%then mass will be transferred between the residual $Y_1^*$ and the 
%$S_1$-tower. 

Note, if $d\neq 0$, 
then both $\epsilon_1$ and $\mu (X_1^*)$ may be chosen small enough 
(relative to $r_1$) to ensure the following solutions lead 
to well-defined sets and mappings. For subsequent stages, assume 
$\epsilon_n$ is chosen small enough to force well-defined 
rescaling parameters, transfer sets and mappings $R_n$, $S_n$. 

\subsection{Stage 2 Construction}
We have specified three cases: $d>0$, $d=0$ and $d<0$. The case $d=0$, can be handled along 
with the case $d>0$. This gives two essential cases. Note the case $d<0$ is analogous 
to the case $d>0$, with the roles of $R_1$ and $S_1$ reversed. However, 
due to a key distinction in the handling of the $R$-rescaling and the $S$-rescaling, 
it is important to clearly define $R_2$ and $S_2$ in both cases. 

\begin{case}[$d\geq 0$]
Define $\tau_1:X_1^{\prime}\to X_1^*$ as a measure preserving map between 
normalized spaces 
$(X_1^{\prime},\mathbb{B} \cap X_1^{\prime},\frac{\mu}{\mu (X_1^{\prime})})$ and 
$(X_1^*,\mathbb{B} \cap X_1^*,\frac{\mu}{\mu (X_1^*)})$. 
Extend $\tau_1$ to the new tower base, 
$$\tau_1:[I_1\setminus I_1^{\prime}]\cup [J^{\prime}_1\setminus I_1^*]\to I_1$$
such that $\tau_1$ preserves normalized measure between 
$$\frac{\mu}{\mu ([I_1\setminus I_1^{\prime}]\cup [J^{\prime}_1\setminus I_1^*])}\mbox{ and }\frac{\mu}{\mu (I_1)}.$$
Define $\tau_1$ on the remainder of the tower consistently 
% $$\tau_1:R_1^{h_1-1}(I_1\setminus [I_1^{\prime}\cup I_1^*])\cup 
% S_1^{h_1-1}J^{\prime}_1\to R^{h_1-1}(I_1)$$
such that 
\begin{eqnarray*} 
\tau_1(x)= 
\left\{\begin{array}{ll}
R_1^{i}\circ \tau_1 \circ R_1^{-i}(x) & \mbox{if $x\in R_1^i(I_1\setminus I_1^{\prime})$ for $0\leq i<h_1$} \\ 
R_1^{i}\circ \tau_1 \circ S_1^{-i}(x) & \mbox{if $x\in S_1^{i}(J^{\prime}_1\setminus I_1^*)$ for $0\leq i<h_1$}
\end{array}
\right.
\end{eqnarray*}
Define 
$R_2:X_2\to X_2$ as $R_2=\tau_1^{-1}\circ R_1\circ \tau_1$. Note 
\begin{eqnarray*} 
R_2(x)= 
\left\{\begin{array}{ll}
S_1(x) & \mbox{if $x\in S_1^{i}(J^{\prime}_1\setminus I_1^*)$ for $0\leq i<h_1-1$} \\ 
R_1(x) & \mbox{if $x\in R_1^{i}(I_1\setminus I^{\prime}_1)$ for $0\leq i<h_1-1$} 
\end{array}
\right.
\end{eqnarray*}
Clearly, $R_2$ is isomorphic to $R_1$ and $R$. 

Define $\psi_1:Y_1^{\prime}\to Y_1^*$ as a measure preserving map between 
normalized spaces 
$(Y_1^{\prime},\mathbb{B} \cap Y_1^{\prime},\frac{\mu}{\mu (Y_1^{\prime})})$ and 
$(Y_1^*,\mathbb{B} \cap Y_1^*,\frac{\mu}{\mu (Y_1^*)})$. 
Extend $\psi_1$ to the new tower base, 
$$\psi_1:[J_1\setminus J_1^{\prime}]\cup J_1^*\cup I^{\prime}_1\to J_1$$
such that $\psi_1$ preserves normalized measure between 
$$\frac{\mu}{\mu ([J_1\setminus J_1^{\prime}]\cup J_1^*\cup I^{\prime}_1)}\mbox{ and }\frac{\mu}{\mu (J_1)}.$$
Define $\psi_1$ on the remainder of the tower consistently 
% $$\psi_1:S_1^{h_1-1}(J_1\setminus [J_1^{\prime}\cup J_1^*])\cup 
% R_1^{h_1-1}I^{\prime}_1\to R^{h_1-1}(J_1)$$
such that 
\begin{eqnarray*} 
\psi_1(x)= 
\left\{\begin{array}{ll}
S_1^{i}\circ \psi_1 \circ S_1^{-i}(x) & \mbox{if $x\in S_1^i(J_1\setminus J_1^{\prime})$ for $0\leq i<h_1$} \\ 
S_1^{i}\circ \psi_1 \circ R_1^{-i}(x) & \mbox{if $x\in R_1^{i}(I^{\prime}_1)$ for $0\leq i<h_1$} \\ 
\beta_1^i\circ \psi_1 \circ \beta_1^{-i}(x) & \mbox{if $x\in J_1^*(i)$ for $0\leq i<h_1$}
\end{array}
\right.
\end{eqnarray*}
In this case, define $S_2:Y_2\to Y_2$ such that 
$S_2=\psi_1^{-1}\circ S_1\circ \psi_1$. Note 
\begin{eqnarray*} 
S_2(x)= 
\left\{\begin{array}{ll}
R_1(x) & \mbox{if $x\in R_1^{i}I^{\prime}_1$ for $0\leq i<h_1-1$} \\ 
S_1(x) & \mbox{if $x\in S_1^{i}(J_1\setminus J^{\prime}_1)$ for $0\leq i<h_1-1$} \\
\beta_1(x) & \mbox{if $x\in J_1^*(i)$ for $0\leq i<h_1-1$} \\ 
\psi_1^{-1}\circ S_1\circ \psi_1(x) & \mbox{if $x\in Y_1^{\prime}\cup S_1^{h_1-1}(J_1\setminus J_1^{\prime})\cup R_1^{h_1-1}I^{\prime}_1\cup \beta_1^{h_1-1}J_1^*$} 
\end{array}
\right.
\end{eqnarray*}
and $S_2$ is isomorphic to $S_1$ and $S$. 
\end{case}

\begin{case}[$d<0$]  
Define $\tau_1:X_1^{\prime}\to X_1^*$ as a measure preserving map between 
normalized spaces 
$(X_1^{\prime},\mathbb{B} \cap X_1^{\prime},\frac{\mu}{\mu (X_1^{\prime})})$ 
and $(X_1^*,\mathbb{B} \cap X_1^*,\frac{\mu}{\mu (X_1^*)})$. 
Extend $\tau_1$ to the new tower base, 
$$\tau_1:[I_1\setminus I_1^{\prime}]\cup I_1^*\cup J^{\prime}_1\to I_1$$
such that $\tau_1$ preserves normalized measure between 
$$\frac{\mu}{\mu ([I_1\setminus I_1^{\prime}]\cup I_1^*\cup J^{\prime}_1)}\mbox{ and }\frac{\mu}{\mu (I_1)}.$$
Define $\tau_1$ on the remainder of the tower consistently 
% $$\tau_1:R_1^{h_1-1}(I_1\setminus [I_1^{\prime}\cup I_1^*])\cup 
% S_1^{h_1-1}J^{\prime}_1\to R^{h_1-1}(I_1)$$
such that 
\begin{eqnarray*} 
\tau_1(x)= 
\left\{\begin{array}{ll}
R_1^{i}\circ \tau_1 \circ R_1^{-i}(x) & \mbox{if $x\in R_1^i(I_1\setminus I_1^{\prime})$ for $0\leq i<h_1$} \\ 
R_1^{i}\circ \tau_1 \circ S_1^{-i}(x) & \mbox{if $x\in S_1^{i}(J^{\prime}_1)$ for $0\leq i<h_1$} \\ 
\alpha_1^i\circ \tau_1 \circ \alpha_1^{-i}(x) & \mbox{if $x\in I_1^*(i)$ for $0\leq i<h_1$}
\end{array}
\right.
\end{eqnarray*}
In this case, define $R_2:X_2\to X_2$ such that 
\begin{eqnarray*} 
R_2(x)= 
\left\{\begin{array}{ll}
S_1(x) & \mbox{if $x\in S_1^{i}J^{\prime}_1$ for $0\leq i<h_1-1$} \\ 
R_1(x) & \mbox{if $x\in R_1^{i}(I_1\setminus I^{\prime}_1)$ for $0\leq i<h_1-1$} \\
\alpha_1(x) & \mbox{if $x\in I_1^*(i)$ for $0\leq i<h_1-1$} \\ 
\tau_1^{-1}\circ R_1\circ \tau_1(x) & \mbox{if $x\in X_1^{\prime}\cup R_1^{h_1-1}(I_1\setminus I_1^{\prime})\cup S_1^{h_1-1}J^{\prime}_1\cup \alpha_1^{h_1-1}I_1^*$} 
\end{array}
\right.
\end{eqnarray*}
Clearly, $R_2$ is isomorphic to $R_1$ and $R$. 

Define $\psi_1:Y_1^{\prime}\to Y_1^*$ as a measure preserving map between 
normalized spaces 
$(Y_1^{\prime},\mathbb{B} \cap Y_1^{\prime},\frac{\mu}{\mu (Y_1^{\prime})})$ 
and $(Y_1^*,\mathbb{B} \cap Y_1^*,\frac{\mu}{\mu (Y_1^*)})$. 
Extend $\psi_1$ to the new tower base, 
$$\psi_1:[J_1\setminus (J_1^{\prime}\cup J_1^*)]\cup I^{\prime}_1\to J_1$$
such that $\psi_1$ preserves normalized measure between 
$$\frac{\mu}{\mu ([J_1\setminus (J_1^{\prime}\cup J_1^*)]\cup I^{\prime}_1)}\mbox{ and }\frac{\mu}{\mu (J_1)}.$$
Define $\psi_1$ on the remainder of the tower consistently 
% $$\psi_1:S_1^{h_1-1}(J_1\setminus [J_1^{\prime}\cup J_1^*])\cup 
% R_1^{h_1-1}I^{\prime}_1\to R^{h_1-1}(J_1)$$
such that 
\begin{eqnarray*} 
\psi_1(x)= 
\left\{\begin{array}{ll}
S_1^{i}\circ \psi_1 \circ S_1^{-i}(x) & \mbox{if $x\in S_1^i(J_1\setminus [J_1^{\prime}\cup J_1^*])$ for $0\leq i<h_1$} \\ 
S_1^{i}\circ \psi_1 \circ R_1^{-i}(x) & \mbox{if $x\in R_1^{i}(I^{\prime}_1)$ for $0\leq i<h_1$}
\end{array}
\right.
\end{eqnarray*}
Define $S_2:Y_2\to Y_2$ such that $S_2=\psi_1^{-1}\circ S_1\circ \psi_1$. Note 
\begin{eqnarray*} 
S_2(x)= 
\left\{\begin{array}{ll}
R_1(x) & \mbox{if $x\in R_1^{i}(I^{\prime}_1)$ for $0\leq i<h_1-1$} \\ 
S_1(x) & \mbox{if $x\in S_1^{i}(J_1\setminus [J^{\prime}_1\cup J_1^*])$ for $0\leq i<h_1-1$}
\end{array}
\right.
\end{eqnarray*}
Transformation $S_2$ is isomorphic to $S_1$ and $S$. 
\end{case}
% End definition of $S_2$
Define $T_2$ as 
\begin{eqnarray*} 
T_2(x)= 
\left\{\begin{array}{ll}
R_2(x) & \mbox{if $x\in X_2$} \\ 
S_2(x) & \mbox{if $x\in Y_2$}
\end{array}
\right.
\end{eqnarray*}
Clearly, neither $T_1$ nor $T_2$ are ergodic. For $T_1$, $X_1$ and $Y_1$ 
are ergodic components, and $X_2$, $Y_2$ are ergodic components for 
$T_2$. See the appendix for a pictorial of the multiplexing operation 
used to produce $R_2$ and $S_2$ from $R_1$, $S_1$ and the intermediary 
maps defined in this section. 
\subsection{General Multiplexing Operation}
For $n\geq 1$, suppose that $R_n$ and $S_n$ have been defined on $X_n$ and $Y_n$ respectively. Construct Rohklin towers of height $h_n$ for each $R_n$ and $S_n$, and such that $I_n$ is the base of the $R_n$ tower, $J_n$ is the base of the $S_n$ tower, and $\mu (\bigcup_{i=0}^{h_n-1}R_n^iI_n) + \mu (\bigcup_{i=0}^{h_n-1}S_n^iJ_n)>1-\epsilon_n$. Let $I^{\prime}_n \subset I_n$ be such that $\mu (I^{\prime}_n) = r_n \mu (I_n)$. Similarly, suppose $J^{\prime}_n \subset J_n$ such that $\mu (J^{\prime}_n) = s_n \mu (J_n)$. 

We define $R_{n+1}$ and $S_{n+1}$ by switching the subcolumns 
$$\{I^{\prime}_n, R_n(I^{\prime}_n),R_n^2(I^{\prime}_n),\ldots ,R_n^{h_n-1}(I^{\prime}_n)\}$$ and 
$$\{J^{\prime}_n, S_n(J^{\prime}_n),S_n^2(J^{\prime}_n),\ldots ,S_n^{h_n-1}(J^{\prime}_n)\}.$$ Let 
\begin{eqnarray*}
X_{n+1}&=&[\bigcup_{i=0}^{h_n-1}R_n^i(I_n\setminus I_n^{\prime})]
\cup [\bigcup_{i=0}^{h_n-1}S_n^iJ_n^{\prime}] 
\cup [X_n\setminus \bigcup_{i=0}^{h_n-1}R_n^iI_n] \\ 
Y_{n+1}&=&[\bigcup_{i=0}^{h_n-1}S_n^i(J_n\setminus J_n^{\prime})]
\cup [\bigcup_{i=0}^{h_n-1}R_n^iI_n^{\prime}] 
\cup [Y_n\setminus \bigcup_{i=0}^{h_n-1}S_n^iJ_n].
\end{eqnarray*}
As in the initial case, it may be necessary to transfer measure 
between each column and its respective residual. 
We can follow the same algorithm as above, and define 
maps $\tau_n, \alpha_n, \psi_n$ and $\beta_n$. 
Thus, we get the following definitions: 
\begin{case}[$d\geq 0$]
\begin{eqnarray*}
\tau_n(x)=& 
\left
\{\begin{array}{ll}
R_n^{i}\circ \tau_n \circ R_n^{-i}(x) & \mbox{if $x\in R_n^i(I_n\setminus I_n^{\prime})$ for $0\leq i<h_n$} \\ 
R_n^{i}\circ \tau_n \circ S_n^{-i}(x) & \mbox{if $x\in S_n^{i}(J^{\prime}_n\setminus I_1^*)$ for $0\leq i<h_n$}
\end{array}
\right.
\end{eqnarray*}
\begin{eqnarray*} 
R_{n+1}(x)=& 
\left\{\begin{array}{ll}
S_n(x) & \mbox{if $x\in S_n^{i}(J^{\prime}_n\setminus I_n^*)$ for $0\leq i<h_n-1$} \\ 
R_n(x) & \mbox{if $x\in R_n^{i}(I_n\setminus I^{\prime}_n)$ for $0\leq i<h_n-1$} \\
\tau_n^{-1}\circ R_n\circ \tau_n(x) & \mbox{if $x\in X_n^{\prime}\cup R_n^{h_n-1}(I_n\setminus I_n^{\prime})\cup S_n^{h_n-1}(J^{\prime}_n\setminus I_n^*)$} 
\end{array}
\right.
\end{eqnarray*}
and $R_{n+1}=\tau_n^{-1}\circ R_n\circ \tau_n$.
\begin{eqnarray*} 
\psi_n(x)=& 
\left\{\begin{array}{ll}
S_n^{i}\circ \psi_n \circ S_n^{-i}(x) & \mbox{if $x\in S_n^i(J_n\setminus J_n^{\prime})$ for $0\leq i<h_n$} \\ 
S_n^{i}\circ \psi_n \circ R_n^{-i}(x) & \mbox{if $x\in R_n^{i}(I^{\prime}_n)$ for $0\leq i<h_n$} \\ 
\beta_n^i\circ \psi_n \circ \beta_n^{-i}(x) & \mbox{if $x\in J_n^*(i)$ for $0\leq i<h_n$}
\end{array}
\right.
\end{eqnarray*}
\begin{eqnarray*} 
S_{n+1}(x)=& 
\left\{\begin{array}{ll}
R_n(x) & \mbox{if $x\in R_n^{i}I^{\prime}_n$ for $0\leq i<h_n-1$} \\ 
S_n(x) & \mbox{if $x\in S_n^{i}(J_n\setminus J^{\prime}_n)$ for $0\leq i<h_n-1$} \\
\beta_n(x) & \mbox{if $x\in J_n^*(i)$ for $0\leq i<h_n-1$} \\ 
\psi_n^{-1}\circ S_n\circ \psi_n(x) & \mbox{if $x\in Y_n^{\prime}\cup S_n^{h_n-1}(J_n\setminus J_n^{\prime})\cup R_n^{h_n-1}I^{\prime}_n\cup \beta_n^{h_n-1}J_n^*$} 
\end{array}
\right.
\end{eqnarray*}
and $S_{n+1}=\psi_n^{-1}\circ S_n\circ \psi_n$.
\end{case}

\begin{case}[$d<0$]
\begin{eqnarray*} 
\tau_n(x)=& 
\left\{\begin{array}{ll}
R_n^{i}\circ \tau_n \circ R_n^{-i}(x) & \mbox{if $x\in R_n^i(I_n\setminus I_n^{\prime})$ for $0\leq i<h_n$} \\ 
R_n^{i}\circ \tau_n \circ S_n^{-i}(x) & \mbox{if $x\in S_n^{i}(J^{\prime}_n)$ for $0\leq i<h_n$} \\ 
\alpha_n^i\circ \tau_n \circ \alpha_n^{-i}(x) & \mbox{if $x\in I_n^*(i)$ for $0\leq i<h_n$}
\end{array}
\right.
\end{eqnarray*}
\begin{eqnarray*} 
R_{n+1}(x)=& 
\left\{\begin{array}{ll}
S_n(x) & \mbox{if $x\in S_n^{i}J^{\prime}_n$ for $0\leq i<h_n-1$} \\ 
R_n(x) & \mbox{if $x\in R_n^{i}(I_n\setminus I^{\prime}_n)$ for $0\leq i<h_n-1$} \\
\alpha_n(x) & \mbox{if $x\in I_n^*(i)$ for $0\leq i<h_n-1$} \\ 
\tau_n^{-1}\circ R_n\circ \tau_n(x) & \mbox{if $x\in X_n^{\prime}\cup R_n^{h_n-1}(I_n\setminus I_n^{\prime})\cup S_n^{h_n-1}J^{\prime}_n\cup \alpha_n^{h_n-1}I_n^*$} 
\end{array}
\right.
\end{eqnarray*}
and $R_{n+1}=\tau_n^{-1}\circ R_n\circ \tau_n$.
\begin{eqnarray*} 
\psi_n(x)=& 
\left\{\begin{array}{ll}
S_n^{i}\circ \psi_n \circ S_n^{-i}(x) & \mbox{if $x\in S_n^i(J_n\setminus [J_n^{\prime}\cup J_n^*])$ for $0\leq i<h_n$} \\ 
S_n^{i}\circ \psi_n \circ R_n^{-i}(x) & \mbox{if $x\in R_n^{i}(I^{\prime}_n)$ for $0\leq i<h_n$}
\end{array}
\right.
\end{eqnarray*}
\begin{eqnarray*} 
S_{n+1}(x)=& 
\left\{\begin{array}{ll}
R_n(x) & \mbox{if $x\in R_n^{i}(I^{\prime}_n)$ for $0\leq i<h_n-1$} \\ 
S_n(x) & \mbox{if $x\in S_n^{i}(J_n\setminus [J^{\prime}_n\cup J_n^*])$ for $0\leq i<h_n-1$} \\
\psi_n^{-1}\circ S_n\circ \psi_n(x) & \mbox{if $x\in Y_n^{\prime}\cup S_n^{h_n-1}(J_n\setminus [J_n^{\prime}\cup J_n^*])\cup R_n^{h_n-1}(I^{\prime}_n)$} 
\end{array}
\right.
\end{eqnarray*}
and $S_{n+1}=\psi_n^{-1}\circ S_n\circ \psi_n$.
\end{case}
\subsection{The Limiting Transformation}
Define the transformation 
$T_{n+1}:X_{n+1}\cup Y_{n+1}\to X_{n+1}\cup Y_{n+1}$ such that 
\begin{eqnarray*} 
T_{n+1}(x)= 
\left\{\begin{array}{ll}
R_{n+1}(x) & \mbox{if $x\in X_{n+1}$} \\ 
S_{n+1}(x) & \mbox{if $x\in Y_{n+1}$} 
\end{array}
\right.
\end{eqnarray*}
The set where $T_{n+1}\neq T_n$ is determined by the top levels 
of the Rokhlin towers, the residual and the transfer set. 
Note the transfer set has measure $d$. Since this set is used to adjust 
the size of the residuals between stages, it can be bounded below 
a constant multiple of $\epsilon_n$. Thus, there is a fixed constant $\kappa$, 
independent of $n$, such that 
$T_{n+1}(x)=T_n(x)$ except for $x$ in a set of measure less than 
$\kappa (\epsilon_n + {1}/{h_n})$. Since $\sum_{n=1}^{\infty} (\epsilon_n + {1}/{h_n}) < \infty$, 
$T(x) = \lim_{n\to \infty}T_n(x)$ exists almost everywhere, 
and preserves normalized Lebesgue measure. 
Without loss of generality, we may assume $\kappa$ and $h_n$ 
are chosen such that if 
$$E_n=\{x\in X| T_{n+1}(x)\neq T_n(x)\}$$
then $\mu (E_n) < \kappa \epsilon_n$ for $n\in \natural$. 
In the following section, additional structure and conditions 
are implemented to ensure that 
$T$ inherits properties from $R$ and $S$, and is also ergodic. 

For the remainder of this paper, 
assume the parameters are chosen such that 
\begin{enumerate}
\item $\lim_{n\to \infty}r_n= 0$;
\item $\sum_{n=1}^{\infty}r_n=\sum_{n=1}^{\infty}s_n=\infty$; 
\item $\lim_{n\to \infty}\mu (Y_n)=0$;
\item $\sum_{n=1}^{\infty}\epsilon_n<\infty$.
\end{enumerate}

\subsection{Isomorphism Chain Consistency}
In the following sections, rigidity and ergodicity will be established 
on sets from a refining sequence of partitions. 
For $n\in \natural$, let $P_n$ be a refining sequence of finite partitions 
which generates the sigma algebra. By refining $P_n$ further if necessary, 
assume $X_n,Y_n,X_n^*,Y_n^*\in P_n$. 
Also, assume 
$R_n^i(I^{\prime}_n), R_n^i(I_n\setminus I^{\prime}_n), S_n^i(J^{\prime}_n), 
S_n^i(J_n\setminus J^{\prime}_n)$ are elements of $P_n$ 
for $0\leq i<h_n$. 
Finally, assume for $0\leq i<h_n-1$, if $p\in P_n$ and 
$p\subset R_n^i(I_n)$ then $R_n(p)\in P_n$. 
Likewise, assume for $0\leq i<h_n-1$, if $p\in P_n$ and 
$p\subset S^i(J_n)$ then $S_n(p)\in P_n$. 
Previously, we required that $\tau_n$ map certain finite orbits from 
the $R_n$ and $S_n$ towers to a corresponding orbit in the $R_{n+1}$ tower. 
In this section, further regularity is imposed on $\tau_n$ relative to $P_n$ 
to ensure dynamical properties of $R_n$ are inherited by $R_{n+1}$. 

Let $P_n^{\prime}=
\{p\in P_n| p\subset \bigcup_{i=0}^{h_n-1}R_n^i(I_n\setminus I_n^{\prime})\}$. 
For each of the following three cases, impose the corresponding 
restriction on $\tau_n$:
\begin{enumerate}
\item for $d_R=0$ and $p\in P_n^{\prime}$, $\tau_n$ is the identity map (i.e. $\tau_n(p)=p$);
\item for $d_R>0$ and $p\in P_n^{\prime}$, $\tau_n(p)\subset p$;
\item for $d_R<0$ and $p\in P_n^{\prime}$, $p\subset \tau_n(p)$.
\end{enumerate}
This can be accomplished by uniformly distributing the appropriate mass 
from the sets $R_n^i(I_n^*)$ using $\tau_n$. 
Note that $\tau_n$ either preserves Lebesgue measure in the case $d_R=0$, 
or $\tau_n$ contracts sets relative to Lebesgue measure in the case $d_R>0$, 
or it inflates measure in the case $d_R<0$. In all three cases, 
for $p\in P_n^{\prime}$,
$$
\frac{\mu (p)}{\mu (\tau_n (p))}=\frac{\mu (X_{n+1})}{\mu (X_n)}.
$$
It is straightforward to verify for any set $A$ measurable relative to $P_n^{\prime}$, 
$$
\mu (A\triangle \tau_nA)<|\frac{\mu (X_{n+1})}{\mu (X_n)} - 1|.
$$
The properties of $\tau_n$ allow approximation of $R_{n+1}$ by $R_n$ indefinitely over time. 
This is needed to establish our rigidity sequence for the limiting transformation $T$. 
This lemma is not required for establishing ergodicity, but for convenience we will reuse it 
to prove our limiting $T$ is ergodic. 
\begin{lemma}
\label{rescalinglem}
Suppose $\delta >0$ and $n\in \natural$ is chosen such that 
\begin{eqnarray*}
|\frac{\mu (X_{n+1})}{\mu (X_n)} - 1| < \frac{\delta}{7}, \\ 
r_n + \epsilon_n + \mu (Y_n) < \frac{\delta}{7}.
\end{eqnarray*}
Then for $A,B\in P_n$ and $i\in \natural$, the following holds:
\begin{enumerate}
\item $|\mu (R_{n+1}^iA\cap B)-\mu (A)\mu (B)| < |\mu (R_{n}^iA\cap B)-\mu (A)\mu (B)| + \delta$;
\item $\mu (R_{n+1}^iA\triangle A) < \mu (R_{n}^iA\triangle A) + \delta$.
\end{enumerate}
\end{lemma}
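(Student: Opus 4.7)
The plan is to exploit the conjugacy $R_{n+1}^i = \tau_n^{-1} \circ R_n^i \circ \tau_n$ together with the rescaling estimate $\mu(E \triangle \tau_n E) < |\mu(X_{n+1})/\mu(X_n) - 1|$ for sets $E$ measurable with respect to $P_n'$. Writing $\rho := \mu(X_{n+1})/\mu(X_n)$, the hypothesis provides a $\delta/7$ budget for each of $|\rho - 1|$ and $r_n + \epsilon_n + \mu(Y_n)$, and I expect to accumulate roughly six or seven error terms of this size when comparing $R_{n+1}^i A$ with $R_n^i A$.

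First I would split $A, B \in P_n$ using the preserved column $X_n' := \bigcup_{j=0}^{h_n-1} R_n^j(I_n \setminus I_n')$. Write $A = A_g \sqcup A_b$ with $A_g = A \cap X_n'$ and $A_b = A \setminus A_g$; since $P_n$ refines the tower levels, $X_n^*$, and $Y_n$, the set $A_g$ is $P_n'$-measurable. Moreover $\mu(A_b) \leq \mu(\bigcup_{i} R_n^i I_n') + \mu(X_n^*) + \mu(Y_n) \leq r_n + \epsilon_n + \mu(Y_n) < \delta/7$, and similarly for $B_b$. Crucially $A_g, B_g \subset X_n \cap X_{n+1}$, so the conjugacy identity applies cleanly on them.

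For part (1) the main contribution is $\mu(R_{n+1}^i A_g \cap B_g)$. Using $B_g = \tau_n^{-1}(\tau_n B_g)$ and the fact that $\tau_n^{-1}$ scales measure by $\rho$, one obtains $\mu(R_{n+1}^i A_g \cap B_g) = \rho\,\mu(R_n^i \tau_n A_g \cap \tau_n B_g)$; combined with $\mu(\tau_n A_g \triangle A_g), \mu(\tau_n B_g \triangle B_g) < |\rho - 1|$ and the $R_n$-invariance of $\mu$, this yields $|\mu(R_{n+1}^i A_g \cap B_g) - \mu(R_n^i A_g \cap B_g)| \leq (2\rho + 1)|\rho - 1|$. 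The remaining cross-terms (those involving $A_b$ or $B_b$) are nonnegative, and their total is dominated by $\mu(A_b) + \mu(B_b)$, so the signed difference between the $R_{n+1}$ and $R_n$ versions of these terms is also bounded by $\mu(A_b) + \mu(B_b)$. Summing gives $|\mu(R_{n+1}^i A \cap B) - \mu(R_n^i A \cap B)| < \delta$, and (1) then follows by the triangle inequality with $\mu(A)\mu(B)$.

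For part (2), I would use $|\mu(R_{n+1}^i A \triangle A) - \mu(R_n^i A \triangle A)| \leq \mu(R_{n+1}^i A \triangle R_n^i A) \leq \mu(R_{n+1}^i A_g \triangle R_n^i A_g) + 2\mu(A_b)$. To bound the main summand, insert $\tau_n^{-1}(R_n^i A_g)$ between $R_{n+1}^i A_g = \tau_n^{-1}(R_n^i \tau_n A_g)$ and $R_n^i A_g$, reducing it to $\rho\,\mu(\tau_n A_g \triangle A_g) + \mu(\tau_n^{-1}(R_n^i A_g) \triangle R_n^i A_g)$. For the second piece, decompose $R_n^i A_g = C_g \sqcup C_b$ with $C_g = R_n^i A_g \cap X_n'$ and $\mu(C_b) \leq \mu(X_n \setminus X_n') \leq r_n + \epsilon_n$, and apply the rescaling bound to $C_g$. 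The main obstacle is verifying that $C_g$ is itself $P_n'$-measurable, so that the rescaling estimate applies to it; this is precisely what the refining hypotheses on $P_n$ (that $R_n$ sends $P_n$-elements inside the tower to $P_n$-elements) are designed to ensure. Granted this, one has $\mu(\tau_n^{-1}(R_n^i A_g) \triangle R_n^i A_g) \leq |\rho - 1| + (1+\rho)\mu(C_b)$, and assembling all the pieces keeps the total below $\delta$.
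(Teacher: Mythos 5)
Your treatment of part (1) matches the paper in substance: your $A_g = A\cap X_n'$ is precisely the paper's $A'$, and the exact-scaling identity $\mu(R_{n+1}^i A_g\cap B_g)=\rho\,\mu(R_n^i\tau_n A_g\cap\tau_n B_g)$ together with $\mu(\tau_n A_g\triangle A_g),\mu(\tau_n B_g\triangle B_g)<|\rho-1|$ is a compressed form of the paper's seven-step chain, so that part of the argument is sound.

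Part (2) as written has a genuine gap. To bound $\mu\bigl(\tau_n^{-1}(R_n^i A_g)\triangle R_n^i A_g\bigr)$ you split $R_n^i A_g=C_g\sqcup C_b$ with $C_g=R_n^i A_g\cap X_n'$ and want to apply the rescaling estimate to $C_g$, asserting that the refining hypotheses on $P_n$ ensure $C_g$ is $P_n'$-measurable. They do not. The hypothesis only guarantees that a \emph{single} application of $R_n$ carries a $P_n$-atom lying in levels $0,\dots,h_n-2$ of the tower to a $P_n$-atom; it says nothing about $R_n^i$ for $i\geq h_n$, and the lemma is claimed for all $i\in\natural$ (in the rigidity proof, $i=\rho_N$ is typically far larger than $h_n$). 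Once the orbit of $A_g$ leaves the top of the tower, passes through the residual $X_n^*$, and re-enters the base, $R_n^i A_g$ need not respect $P_n$ at all. The bound $\mu(E\triangle\tau_n E)<|\rho-1|$ rests on $\tau_n$ being a nested dilation on each $P_n'$-atom; for an arbitrary measurable $E\subset X_n'$ unrelated to $P_n'$, the symmetric difference with its $\tau_n$- or $\tau_n^{-1}$-image can be on the order of $\mu(E)$, not $|\rho-1|$. The paper's chain sidesteps this entirely by never applying $\tau_n$ to $R_n^i A'$: it uses $\mu(R_{n+1}^i A'\triangle A')=\mu\bigl(\tau_n^{-1}(R_n^i\tau_n A'\triangle\tau_n A')\bigr)=\rho\,\mu(R_n^i\tau_n A'\triangle\tau_n A')$ and then compares with $\mu(R_n^i A'\triangle A')$ using only $\mu(\tau_n A'\triangle A')<|\rho-1|$ and the $R_n$-invariance of $\mu$, so that the only set requiring $P_n'$-measurability is $A'$ itself. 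Your proof can be repaired by inserting $\tau_n A_g$ \emph{inside} $R_n^i$ in this manner rather than inserting $\tau_n^{-1}(R_n^i A_g)$ outside, but as it stands the key step rests on an unsupported measurability claim.
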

\begin{proof} 
For $A,B\in P_n$, let 
$A^{\prime}=\bigcup_{p\in P_n^{\prime}}p\cap A$ and 
$B^{\prime}=\bigcup_{p\in P_n^{\prime}}p\cap B$. 
Since $\mu (\bigcup_{j=0}^{h_n-1}R_n^j(I_n^{\prime}))=h_n\mu (I_n^{\prime}) < r_n$ and 
$\mu (X_n^*)<\epsilon_n$, then 
$\mu (A\triangle A^{\prime})<r_n+\epsilon_n < \frac{\delta}{7}$. 
Likewise, $\mu (B\triangle B^{\prime})<\frac{\delta}{7}$. 
Since $|\frac{\mu (X_{n+1})}{\mu (X_n)} - 1| < \frac{\delta}{7}$, then 
$\mu (A\triangle \tau_nA)<\frac{\delta}{7}$. By applying the triangle inequality 
several times, we can get our approximations. Below is a sequence of 
quantities to chain through such that consecutive values in the chain are less than 
$\delta /7$ apart. 
\begin{eqnarray*}
\mu (R_{n+1}^iA\cap B)\rightarrow \mu (R_{n+1}^iA\cap B^{\prime})\rightarrow \mu (R_{n+1}^iA^{\prime}\cap B^{\prime})=\mu (\tau_n^{-1}R_{n}^i\tau_nA^{\prime}\cap B^{\prime}) \\
\mu (\tau_n^{-1}R_{n}^i\tau_nA^{\prime}\cap B^{\prime})\rightarrow \mu (R_{n}^i\tau_nA^{\prime}\cap \tau_n B^{\prime})\rightarrow \mu (R_{n}^i\tau_nA^{\prime}\cap B^{\prime}) \\ 
\rightarrow \mu (R_{n}^iA^{\prime}\cap B^{\prime}) 
\rightarrow \mu (R_{n}^iA^{\prime}\cap B) \rightarrow \mu (R_{n}^iA\cap B) 
\end{eqnarray*}
Each arrow in the chain signifies less than $\frac{\delta}{7}$ difference. Hence, 
$$|\mu (R_{n+1}^iA\cap B)-\mu (R_{n}^iA\cap B)|<\delta$$ which implies 
$$|\mu (R_{n+1}^iA\cap B)-\mu (A)\mu (B)| < |\mu (R_{n}^iA\cap B)-\mu (A)\mu (B)| + \delta.$$

The second part of the lemma can be proven in a similar fashion using the triangle inequality, or chaining through the following six approximations. 
\begin{eqnarray*}
\mu(R_{n+1}^iA\triangle A)\rightarrow \mu (R_{n+1}^iA\triangle A^{\prime})\rightarrow 
\mu (R_{n+1}^iA^{\prime}\triangle A^{\prime})=
\mu (\tau_n^{-1}R_n^i \tau_nA^{\prime}\triangle A^{\prime}) \\ 
\mu (\tau_n^{-1}R_n^i \tau_nA^{\prime}\triangle A^{\prime})\rightarrow \mu (R_n^i \tau_nA^{\prime}\triangle \tau_nA^{\prime}) \rightarrow \mu (R_n^i \tau_nA^{\prime}\triangle A^{\prime}) \\ 
\rightarrow \mu (R_n^i A^{\prime}\triangle A^{\prime}) 
\rightarrow \mu (R_n^i A^{\prime}\triangle A) \rightarrow \mu (R_n^i A\triangle A)
\end{eqnarray*}
Since each arrow indicates a difference less than $\frac{\delta}{7}$, then 
$$|\mu(R_{n+1}^iA\triangle A) - \mu(R_{n}^iA\triangle A)| < \delta.$$ 
This completes the proof of the lemma. 
\end{proof}

\section{Establishing Rigidity}
Suppose that $\rho_n$ is a rigidity sequence for $R$. 
In this section, we define parameters such that $T$ is rigid on $\rho_n$. 
\subsection{Waiting for Rigidity}
Let $\delta_n$ be a sequence 
of positive real numbers such that $\lim_{n\to \infty}\delta_n=0$. 
Since $T_n|_{X_n}=R_n$ is rigid, choose 
natural number $M_n^1 > \max{\{h_{n-1},M_{n-1}^1\}}$ such that for $N\geq M_n^1$, 
and $A\in P_{n-1}\cap X_n$, 
$$\mu (R_n^{\rho_N}A\triangle A)<\delta_n.$$
Choose $\epsilon_{n}$ such that 
\begin{eqnarray}
\frac{\epsilon_{n-1}}{M_n^1}<\epsilon_{n}.
\end{eqnarray} 
Also, without loss of generality, assume $h_n > M_n^1$. 
Below we show this choice of $\epsilon_{n}$ is sufficient to produce 
$T(x)=\lim_{n\to \infty}T_n(x)$ rigid on $\rho_n$. 
First, we provide a diagram and heuristic description of our method 
for establishing rigidity on $\rho_n$. 

\subsection{The Key Idea}
\begin{tikzpicture}
{[line width=2pt] 
{[black] \draw (1+.5,2) -- (1+2.2,2);}}
{[line width=4pt] 
{[blue]\draw (1+2.2,2) -- (1+4.2,2);}}
{[line width=4pt] 
{[red]\draw (1+4.2,2) -- (1+7,2);}}
{[line width=2pt] 
{[black]\draw (1+7,2) -- (1+11.5,2);}}
%\draw (1+.5,.75) -- (1+2.25,.75);
{[line width=1pt]
\draw[-] (1+1,1.75) -- (1+1,2.25) node[, above] {$h_{n-1}$};
\draw[-] (2.2+1,1.75) -- (2.2+1,2.25) node[, above] {$M_n^1$};
\draw[-] (4.2+1,1.75) -- (4.2+1,2.25) node[, above] {$h_{n}$};
\draw[-] (7+1,1.75) -- (7+1,2.25) node[, above] {$M_{n+1}^1$};
\draw[-] (10.5+1,1.75) -- (10.5+1,2.25) node[, above] {$h_{n+1}$};
\shade[left color=blue, right color=red] (3.2,1.9) rectangle (8,2.1);
}
\end{tikzpicture}

\noindent 
To establish rigidity of $T$, we can focus on the 
asymptotic rigidity of $T$ on the intervals $(M_n^1,M_{n+1}^1]$. 
We have chosen $M_n^1$ sufficiently large such that rigidity 
\mbox{"kicks in"} for $R_n$ and $\rho_i > M_n^1$. 
Lemma \ref{rescalinglem} allows us to approximate $R_n$ by $R_{n+1}$ 
as $\rho_i$ becomes closer to $M_{n+1}^1$. The fact that we can 
choose $\epsilon_{n+1}$ arbitrarily small compared to ${1}/{M_{n+1}^1}$ 
allows us to carry over the approximation to $T$. 
A precise proof is given below. 

\subsection{Rigidity Proof}
If $E_{n+1}=\{x\in X: T_{n+2}(x)\neq T_{n+1}(x)\}$ and 
$$E_{n+1}^{1}=\bigcup_{i=0}^{M_{n+1}^1-1}[T_{n+2}^{-i}E_{n+1}\cup T_{n+1}^{-i}E_{n+1}]$$
then $\mu (E_{n+1}^{1}) < 2M_{n+1}^1 \kappa \epsilon_{n+1}$. 
For $x\notin E_{n+1}^{1}$, $T_{n+2}^i(x)=T_{n+1}^i(x)$ for $0\leq i\leq M_{n+1}^1$. 
Let $\hat{E}_{n+1}^{1} = \bigcup_{k=n+1}^{\infty}E_k^{1}$. 
For $x\notin \hat{E}_{n+1}^{1}$ and $0\leq i\leq M_{n+1}^1$, 
$T^i(x) = T_{n+1}^i(x)$. 
Also,
$$\mu (\hat{E}_{n+1}^{1}) < \sum_{k=n+1}^{\infty} 2M_{k}^1\kappa \epsilon_k 
< \sum_{k=n+1}^{\infty}2\kappa \epsilon_{k-1} \to 0$$
as $n\to \infty$. 

\begin{proof}[Proof of rigidity]
Let $A$ be a set in $P_{n_1}$ for some $n_1$, and let $\delta >0$. 
Choose $n_2\geq n_1$ such that for $n\geq n_2$, 
\begin{enumerate}
\item $|\frac{\mu (X_{n+1})}{\mu (X_n)} - 1|< \delta / 28$;
\item $r_n + \epsilon_n + \mu (Y_n) < \delta / 28$;
\item $\delta_n < \delta / 6$;
\item $\sum_{i=n_2}^{\infty}2\kappa \epsilon_{i} < \delta / 12$.
\end{enumerate}
For $n>n_2$, let $M_n^1<N\leq M_{n+1}^1$, $A_1 = A\setminus \hat{E}_{n+1}^{1}$ 
and $A_2=A\cap X_n$. Thus, \begin{eqnarray*}
\mu (T^{\rho_N}A\triangle A) &\leq& \mu (T^{\rho_N}A\triangle T^{\rho_N}A_1) 
+ \mu (T^{\rho_N}A_1\triangle A) \\ 
&=& \mu (A\triangle A_1)  + \mu (R_{n+1}^{\rho_N}A_1\triangle A) \\ 
&<& \frac{\delta}{4}  + \mu (R_{n+1}^{\rho_N}A_1\triangle R_{n+1}^{\rho_N}A) + \mu (R_{n+1}^{\rho_N}A\triangle A) \\ 
&<& \frac{\delta}{2} + \mu (R_{n+1}^{\rho_N}A\triangle A).
\end{eqnarray*}
By Lemma \ref{rescalinglem}, 
\begin{eqnarray*}
\mu (T^{\rho_N}A\triangle A) 
&<& \frac{\delta}{2} + \mu (R_{n+1}^{\rho_N}A\triangle A) < \frac{3\delta}{4} + \mu (R_{n}^{\rho_N}A\triangle A) \\
&\leq& \frac{3\delta}{4} + \mu (R_{n}^{\rho_N}A\triangle R_{n}^{\rho_N}A_2) + 
\mu (R_{n}^{\rho_N}A_2\triangle A_2) + \mu (A_2\triangle A) \\ 
&<& \frac{3\delta}{4} + 2\mu (Y_n) + \delta_n < \delta.
\end{eqnarray*}
Therefore, $\rho_n$ is a rigidity sequence for $T$. 
\end{proof}

\section{Ergodicity}
A measure preserving transformation $T$ on a Lebesgue space is 
ergodic if any invariant set has measure zero or one. 
It is well known this is equivalent to the mean and pointwise 
ergodic theorem. 
For our purposes, we use the following equivalent condition 
of ergodicity: for all measurable sets $A$ and $B$, 
$$\lim_{N\to \infty}\frac1N\sum_{i=0}^{N-1}\mu (T^iA\cap B)=\mu(A)\mu(B).$$
Let $P_n, n\in \natural$ be a sequence of finite refining partitions 
as defined in the previous section. 
Using approximation, $T$ is ergodic if the previous condition holds 
for all natural numbers $n$ and sets $A$ and $B$ from $P_n$.

\subsection{Ergodic Parameter Choice}
Let $\delta_n$ be a sequence 
of positive real numbers such that $\lim_{n\to \infty}\delta_n=0$. 
Since $T_n|_{X_n}=R_n$ is ergodic, choose 
natural number $M_n=M_n^{2}$ such that for $N\geq M_n$, and 
sets $A,B\in P_n\cap X_n$,
$$|\frac1N\sum_{i=0}^{N-1}\frac{\mu (T_n^iA\cap B)}{\mu (X_n)}-\frac{\mu(A)\mu(B)}{\mu (X_n)^2}|<\delta_n.$$
Note that 
\begin{eqnarray*}
|\frac1N\sum_{i=0}^{N-1}\mu (T_n^iA\cap B)&-&\mu(A)\mu(B)| \\
&=& \mu (X_n)|\frac1N\sum_{i=0}^{N-1}\frac{\mu (T_n^iA\cap B)}{\mu (X_n)}-\frac{\mu(A)\mu(B)}{\mu (X_n)}| \\ 
&\leq& \mu (X_n)|\frac1N\sum_{i=0}^{N-1}\frac{\mu (T_n^iA\cap B)}{\mu (X_n)}-\frac{\mu(A)\mu(B)}{\mu (X_n)^2}| \\ 
&+& |\frac{\mu(A)\mu(B)}{\mu (X_n)} - \mu(A)\mu(B)| \\ 
&<& \delta_n + \frac{\mu (Y_n)}{\mu (X_n)}
\end{eqnarray*}
Choose $\epsilon_{n+1}$ such that 
\begin{eqnarray}
\frac{\epsilon_n}{M_n}<\epsilon_{n+1}.
\end{eqnarray} 
\subsection{Approximation}

As previously, set $E_{n+1}=\{x\in X: T_{n+2}(x)\neq T_{n+1}(x)\}$. Let 
$$E_{n+1}^{2}=\bigcup_{i=0}^{M_{n+1}-1}[T_{n+2}^{-i}E_{n+1}\cup T_{n+1}^{-i}E_{n+1}]$$
Thus, $\mu (E_{n+1}^{2}) < 2M_{n+1} \kappa \epsilon_{n+1}$. 
For $x\notin E_{n+1}^{2}$, $T_{n+2}^i(x)=T_{n+1}^i(x)$ for $0\leq i\leq M_{n+1}$. 
Let $\hat{E}_{n+1}^{2} = \bigcup_{k=n+1}^{\infty}E_k^{2}$. 
For $x\notin \hat{E}_{n+1}^{2}$ and $0\leq i\leq M_{n+1}$, 
$T^i(x) = T_{n+1}^i(x)$. 
Also,
$$\mu (\hat{E}_{n+1}^{2}) < \sum_{k=n+1}^{\infty} 2M_{k}\kappa \epsilon_k 
< \sum_{k=n+1}^{\infty}2\kappa \epsilon_{k-1} \to 0$$
as $n\to \infty$. 

\begin{proof}[Proof of ergodicity]
Let $A$ and $B$ be sets in $P_{n_1}$ for some $n_1$, and let $\delta >0$. 
Choose $n_2\geq n_1$ such that for $n\geq n_2$, 
\begin{enumerate}
\item $|\frac{\mu (X_{n+1})}{\mu (X_n)} - 1|< \delta / 28$;
\item $r_n + \epsilon_n + \mu (Y_n) < \delta / 28$;
\item $\delta_n + \frac{\mu (Y_n)}{\mu (X_n)}< \delta / 4$;
\item $\sum_{i=n_2}^{\infty}2\kappa \epsilon_{i} < \delta / 12$.
\end{enumerate}
For $n>n_2$, let $M_n<N\leq M_{n+1}$, $A_1 = A\setminus \hat{E}_{n+1}^2$ and 
$B_1 = B\setminus \hat{E}_{n+1}^2$. 
\begin{eqnarray*}
|\frac{1}{N}\sum_{i=0}^{N-1}\mu (T^iA\cap B) &-& \mu (A)\mu (B)| \\ 
&\leq& |\frac{1}{N}\sum_{i=0}^{N-1}\mu (T^iA\cap B) - 
\frac{1}{N}\sum_{i=0}^{N-1}\mu (T^iA_1\cap B)| \\ 
&+& |\frac{1}{N}\sum_{i=0}^{N-1}\mu (T^iA_1\cap B)-\mu (A)\mu (B)| \\ 
&\leq&\frac{1}{N}\sum_{i=0}^{N-1}|\mu (T^iA)-\mu (T^iA_1)| \\ 
&+& |\frac{1}{N}\sum_{i=0}^{N-1}\mu (R_{n+1}^iA_1\cap B)-\mu (A)\mu (B)| \\ 
&<& \frac{\delta}{4} + |\frac{1}{N}\sum_{i=0}^{N-1}\mu (R_{n+1}^iA_1\cap B)-\mu (R_{n+1}^iA\cap B)| \\ 
&+& |\frac{1}{N}\sum_{i=0}^{N-1}\mu (R_{n+1}^iA\cap B)-\mu (A)\mu (B)| \\ 
&<& \frac{\delta}{2} + |\frac{1}{N}\sum_{i=0}^{N-1}\mu (R_{n+1}^iA\cap B)-\mu (A)\mu (B)|.
\end{eqnarray*}
Since $A,B\in P_n$, then by Lemma \ref{rescalinglem}, 
\begin{eqnarray*}
|\frac{1}{N}\sum_{i=0}^{N-1}\mu (T^iA\cap B) &-& \mu (A)\mu (B)| \\ 
&<& \frac{\delta}{2} + \frac{1}{N}\sum_{i=0}^{N-1}|\mu (R_{n+1}^iA\cap B)-\mu (A\cap B)| \\
&<& 
\frac{3\delta}{4} + \frac{1}{N}\sum_{i=0}^{N-1}|\mu (R_{n}^iA\cap B)-\mu (A\cap B)| < \delta.
\end{eqnarray*}
Since $\delta$ is chosen arbitrarily, and the above holds for any 
$n>n_2$ and $M_n<N\leq M_{n+1}$, then $T$ is ergodic.
\end{proof}

\section{Weak Mixing}
Since the weak mixing component is dissipative, and the resulting 
transformation inherits its rigidity properties from $R$, we do not 
focus on multiplexing with general weak mixing transformations. 
Instead, we set $S$ equal to the famous Chacon transformation. 
It is defined via cutting and stacking, and considered the 
earliest construction demonstrated to be weak mixing and not mixing. 
See \cite{Fri70} for a precise definition. 
For the remainder of this paper, assume both $R$ and $S$ are defined 
on $([0,1),\mu ,\mathbb{B})$ where $\mu$ is Lebesgue measure. In this section, 
we further specify $h_n$ and switching sets 
$C_{n}=\bigcup_{i=0}^{h_n-1}R_n^i(I_n^{\prime})$ for $n\in \natural$. 
As in previous sections, all conditions imposed are easily satisfied 
by choosing a faster growing sequence of tower heights $h_n$. 
No upper bounds are imposed on the growth rate of $h_n$. 
\subsection{Switching Set Definition}
For each $k\in \natural$ and $n>k$, denote $U_k^n=\bigcup_{j=k}^{n-1}C_j$, 
$V_k^n=(U_k^n)^c$ and $\dot{V}_k^n=V_k^n \cap X_n$. 
Since $R_n$ is ergodic on $X_n$, $r_n$ is fixed, 
and $C_n$ predominantly represents long orbits of $R_n$, 
then $h_n$ may be chosen sufficiently large 
such that $C_n$ is near conditionally independent 
of $\dot{V}_k^n$ for each $k<n$. 

\noindent
Precisely, define $h_n$ and $C_n$ such that 
\begin{eqnarray}
\label{SwitchingSetInd}
|\frac{\mu (C_n\cap \dot{V}_k^n)}{\mu (X_n)} - 
\frac{\mu (C_n)\mu (\dot{V}_k^n)}{\mu (X_n)^2}| \leq 
\frac{1}{2} \mu (C_n)\mu (\dot{V}_k^n).
\end{eqnarray}
\begin{lemma}
For each $k\in \natural$, $\lim_{n\to \infty}\mu (V_{k}^n)=0$. 
\end{lemma}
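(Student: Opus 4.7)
The plan is to show directly that the decreasing sequence of sets $V_k^n$ shrinks to measure zero, by turning the near-independence assumption (\ref{SwitchingSetInd}) into a multiplicative recursion. First I would record that $V_k^{n+1} = V_k^n \setminus C_n$, so $\{V_k^n\}_n$ is monotone decreasing and
$$\mu(V_k^{n+1}) = \mu(V_k^n) - \mu(V_k^n \cap C_n).$$
Since $C_n \subset X_n$ by construction, $V_k^n \cap C_n = \dot{V}_k^n \cap C_n$, which is precisely the quantity controlled by (\ref{SwitchingSetInd}).

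Next I would extract a one-sided form of that bound. Multiplying (\ref{SwitchingSetInd}) through by $\mu(X_n)$ and using $\mu(X_n) \le 1$ gives
$$\mu(C_n \cap \dot{V}_k^n) \ge \frac{\mu(C_n)\mu(\dot{V}_k^n)}{\mu(X_n)} - \tfrac{1}{2}\mu(C_n)\mu(\dot{V}_k^n)\mu(X_n) \ge \tfrac{1}{2}\mu(C_n)\mu(\dot{V}_k^n).$$
Combined with the trivial inclusion $V_k^n \setminus X_n \subset Y_n$, hence $\mu(\dot{V}_k^n) \ge \mu(V_k^n) - \mu(Y_n)$, this yields the key recursion
$$\mu(V_k^{n+1}) \le \mu(V_k^n) - \tfrac{1}{2}\mu(C_n)\bigl(\mu(V_k^n) - \mu(Y_n)\bigr).$$
Meanwhile $\mu(C_n) = h_n r_n \mu(I_n) \ge r_n(1 - \epsilon_n)\mu(X_n)$, because the Rokhlin tower fills all but an $\epsilon_n$-fraction of $X_n$; together with $\mu(X_n) \to 1$, $\epsilon_n \to 0$, and $\sum r_n = \infty$, this gives $\sum_n \mu(C_n) = \infty$.

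To conclude, fix $\varepsilon > 0$ and pick $n_1 \ge k$ with $\mu(Y_n) < \varepsilon/4$ for all $n \ge n_1$. Either there exists $n^\ast \ge n_1$ with $\mu(V_k^{n^\ast}) \le \varepsilon/2$, in which case monotonicity of $V_k^n$ gives $\mu(V_k^m) \le \varepsilon$ for every $m \ge n^\ast$; or $\mu(V_k^n) > \varepsilon/2 > 2\mu(Y_n)$ for every $n \ge n_1$, so $\mu(V_k^n) - \mu(Y_n) \ge \mu(V_k^n)/2$ and the recursion collapses to $\mu(V_k^{n+1}) \le \mu(V_k^n)\bigl(1 - \mu(C_n)/4\bigr)$. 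Iterating from $n_1$ and invoking $\sum \mu(C_n) = \infty$ forces the product $\prod (1 - \mu(C_n)/4)$ to zero, contradicting $\mu(V_k^n) > \varepsilon/2$.

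The delicate point is the interaction between the recursion and the "leakage" $\mu(Y_n)$ once $\mu(V_k^n)$ itself becomes comparable to $\mu(Y_n)$, but the dichotomy above bypasses this cleanly; everything else is routine bookkeeping on the approach $\mu(X_n) \to 1$.
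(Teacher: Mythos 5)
Your proof is correct and follows essentially the same route as the paper's: turn the near-independence bound (\ref{SwitchingSetInd}) into the multiplicative recursion $\mu(V_k^{n+1})\le\mu(V_k^n)\bigl(1-\tfrac14\mu(C_n)\bigr)$ once $\mu(Y_n)$ is small relative to $\mu(V_k^n)$, then invoke $\sum\mu(C_n)=\infty$ to drive the product to zero. Your $\varepsilon$-dichotomy is a slightly cleaner way of organizing the paper's proof by contradiction, but the substance is identical.
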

\begin{proof} Suppose the claim is not true, and 
there exists $k_0\in \natural$ such that 
$$\lim_{n\to \infty}\mu (V_k^n)>0.$$ 
Since $\lim_{n\to \infty}\mu (Y_n)=0$, we can choose $k_1>k_0$ such that 
$\mu (Y_j) < \frac12 \mu(V_{k_1}^{k_1+n})$ for $j\geq k_1$ and 
$n\in \natural$. 
Thus, 
\begin{eqnarray*}
\frac{\mu (C_{k_1+1}\cap \dot{V}_{k_1}^{k_1+1})}{\mu (X_{k_1+1})} 
&\geq& \frac{\mu (C_{k_1+1}) \mu ( \dot{V}_{k_1}^{k_1+1})}{\mu (X_{k_1+1})^2}
- \frac12 \mu (C_{k_1+1})\mu (\dot{V}_{k_1}^{k_1+1}) \\ 
\mu (C_{k_1+1}\cap \dot{V}_{k_1}^{k_1+1}) 
&\geq& \mu (V_{k_1}^{k_1+1}) \frac{\mu (V_{k_1}^{k_1+1}\cap X_{k_1+1})}{\mu (V_{k_1}^{k_1+1})} \mu (C_{k_1+1})[\frac{1}{\mu (X_{k_1+1})} 
- \frac{\mu (X_{k_1+1})}{2}] \\
&>& \frac14 \mu (C_{k_1+1})\mu (V_{k_1}^{k_1+1}).
\end{eqnarray*}
Hence, 
\begin{eqnarray*}
\mu (V_{k_1}^{k_1+2}) &=& 
\mu (V_{k_1}^{k_1+1}) - \mu (C_{k_1+1}\cap V_{k_1}^{k_1+1}) \\ 
&<& \mu (V_{k_1}^{k_1+1})[1 - \frac14 \mu (C_{k_1+1})] \\ 
&<& (1 - \frac14 \mu (C_{k_1}))(1 - \frac14 \mu (C_{k_1+1})).
\end{eqnarray*}
Extending this inductively produces 
$$
\mu (V_{k_1}^{k_1+n}) < \prod_{i=0}^{n-1}(1-\frac14 \mu (C_{k_1+i})).
$$
Note that 
$$
\mu (C_n)=\mu (I_n^{\prime})h_n = 
\frac{\mu (I_n^{\prime})}{\mu (I_n)}\mu (I_n)h_n = r_n\mu (X_n).
$$
Since $\sum_{n=1}^{\infty}r_n=\infty$ and 
$\lim_{n\to \infty} \mu (X_n)=1$, then both 
$\sum_{n=1}^{\infty}\mu (C_n)=\infty$ and 
$\sum_{n=1}^{\infty}\frac14 \mu (C_n)=\infty$. 
This is sufficient to force 
$\lim_{n\to \infty}\prod_{i=0}^{n-1}(1-\frac14 \mu (C_{k_1+i}))=0$ 
which proves our claim by contradiction. 
\end{proof}
The previous claim establishes the following property 
that almost every point falls in infinitely many sets $C_n$. 
\begin{property}
\label{limsup}
$\mu (\bigcap_{n=1}^{\infty}\bigcup_{i=n}^{\infty}C_i)=1$. 
\end{property}
\subsection{Multiplexing Chacon's Transformation}
Chacon's transformation $S$ is typically defined using 
cutting and stacking \cite{Fri70}. Initialize 
$I_1^0=[0,2/3)$ and $\mathcal{C}_1=I_1^0$. 
Cut $I_1$ into 3 pieces of equal width, $I_2^0=[0,2/9), I_2^1=[2/9,4/9), 
I_2^3=[4/9,2/3)$, and add a single spacer $I_2^2=[2/3, 8/9)$ above 
interval $I_2^1$. Stack into a single column 
$\mathcal{C}_2=<I_2^0,I_2^1,I_2^2,I_2^3>$. 
Define $S$ as the linear map from $I_2^i$ to $I_2^{i+1}$ for $i=0,1,2$. 
Let $H_n={(3^n-1)}/{2}$ be the height 
of column $\mathcal{C}_n$. Obtain $\mathcal{C}_{n+1}$ by cutting $\mathcal{C}_n$ into 3 subcolumns 
of equal width, $\mathcal{C}_n^0,\mathcal{C}_n^1,\mathcal{C}_n^2$, adding one spacer above the 
second subcolumn and stacking left to right. Again, $S$ maps each 
level linearly to the level directly above it. Also, 
notice the height of $\mathcal{C}_{n+1}$ equals $H_{n+1}=3H_n+1=\frac{3^{n+1}-1}{2}$. 
The main property we utilize in this work is related to one of its 
limit joinings. 

\begin{lemma}
\label{ChaLem}
Let $S$ be Chacon's transformation. Given any two measurable sets, 
$A$ and $B$, 
$$
\lim_{n\to \infty} \mu (S^{H_n}A\cap B)=(\mu (A\cap B)+\mu (S^{-1}A\cap B))/2.
$$
\end{lemma}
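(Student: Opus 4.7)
The plan is to reduce to the case where $A$ and $B$ are finite unions of levels of some fixed Chacon column $\mathcal{C}_N$; such sets generate the Borel $\sigma$-algebra and both sides of the claimed identity are continuous in $A,B$ under symmetric difference. Fixing such $A$ and $B$, for each $n\geq N$ write $A=\bigsqcup_{k\in\mathcal{A}_n} L(k)$ and $B=\bigsqcup_{j\in\mathcal{B}_n}L(j)$, where $L(k)$ denotes the $k$th level of $\mathcal{C}_n$, of common measure $w_n\to 0$. The task reduces to computing $\mu(S^{H_n}L(k)\cap L(j))$.

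The heart of the argument is the recursive identity
\[
\mu(S^{H_n}L(k)\cap L(j)) \;=\; \tfrac{w_n}{2}\bigl[\mathbf{1}_{j=k}+\mathbf{1}_{j=k-1}\bigr]\qquad(1\leq k\leq H_n-1),
\]
together with the boundary case $\mu(S^{H_n}L(0)\cap L(j))=(w_n/2)\mathbf{1}_{j=0}$, the remaining half of $S^{H_n}L(0)$ being absorbed by spacers. To establish this, I would decompose $L(k)=L_0(k)\sqcup L_1(k)\sqcup L_2(k)$ into the left/middle/right horizontal thirds used when forming $\mathcal{C}_{n+1}$. The Chacon stacking rule then gives $S^{H_n}L_0(k)=L_1(k)\subset L(k)$ and, for $k\geq 1$, $S^{H_n}L_1(k)=L_2(k-1)\subset L(k-1)$, while $S^{H_n}L_2(k)$ is pushed above $\mathcal{C}_{n+1}$. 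I would next refine $L_2(k)$ via the stage-$(n+2)$ cut $\mathcal{C}_{n+1}^0\sqcup\mathcal{C}_{n+1}^1\sqcup\mathcal{C}_{n+1}^2$; a short height computation powered by $H_{n+1}=3H_n+1$ shows that $S^{H_n}$ sends the first sub-piece back into $L(k)$, the second into $L(k-1)$, and the third above $\mathcal{C}_{n+2}$. Iterating, at stage $n+m$ one deposits mass $w_n/3^m$ into each of $L(k)$ and $L(k-1)$ and leaves a residual of size $w_n/3^m$; summing the geometric series $\sum_{m\geq 1}3^{-m}=1/2$ proves the claim.

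Given the claim, summation over $k$ and $j$ yields
\[
\mu(S^{H_n}A\cap B) \;=\; \tfrac{w_n}{2}\Bigl[|\mathcal{A}_n\cap\mathcal{B}_n|+|\{k\in\mathcal{A}_n:\,k\geq 1,\,k-1\in\mathcal{B}_n\}|\Bigr].
\]
Comparing this with $\mu(A\cap B)=w_n|\mathcal{A}_n\cap\mathcal{B}_n|$ and $\mu(S^{-1}A\cap B)=w_n|\{k\in\mathcal{A}_n:k\geq 1,\,k-1\in\mathcal{B}_n\}|+O(w_n)$ (the $O(w_n)$ absorbing the analogous boundary contribution of $S^{-1}L(0)$, whose iterative analysis mirrors that of $S^{H_n}L(0)$ and places half of its mass in $L(H_n-1)$ and the other half in spacers), one obtains
\[
\mu(S^{H_n}A\cap B)=\tfrac12\mu(A\cap B)+\tfrac12\mu(S^{-1}A\cap B)+O(w_n).
\]
Letting $n\to\infty$ and then invoking standard symmetric-difference approximation for general measurable $A,B$ completes the proof.

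The main obstacle is the inductive height bookkeeping in the second paragraph. At each stage $n+m$ one must pinpoint which sub-$\cdots$-subcolumn of $\mathcal{C}_{n+m}$ contains the residual piece and verify, via the recursion $H_{n+m+1}=3H_{n+m}+1$, that the $S^{H_n}$-image of every non-escaping sub-piece returns to level $k$ or $k-1$ of the original column $\mathcal{C}_n$. The arithmetic is elementary but not automatic; it is the mechanism by which the factor $\tfrac12$ in the statement is produced from Chacon's underlying $1/3$--$1/3$--$1/3$ cutting pattern.
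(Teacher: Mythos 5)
Your proof is correct and follows essentially the same route as the paper's: reduce to finite unions of levels, observe that the iterated $1/3$-cutting places a spacer above exactly half the mass of the top level of $\mathcal{C}_n$ (your geometric series $\sum_{m\geq 1}3^{-m}=1/2$), so that $S^{H_n}$ sends half of each level to itself and half to the level below, and finish by approximation. The paper states this much more tersely, but the mechanism — including the handling of the base level via spacers — is identical to your explicit bookkeeping.
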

\begin{proof} Each column $\mathcal{C}_n$, $n\in \natural$, has a single level of spacer 
above precisely half the mass of the top level of $\mathcal{C}_n$. This includes 
the spacers added when $\mathcal{C}_n$ is cut into 3 subcolumns, as well as the 
infinitely many spacers added when $\mathcal{C}_{n+1}, \mathcal{C}_{n+2},\ldots$ are cut 
into 3 subcolumns and stacked. Thus, $S^{H_n}$ maps half of each level 
to the same level, and maps the other half to the level directly below itself. 
This establishes the lemma for sets consisting of a finite union of levels. 
Since the levels of the columns form a refining sequence of partitions 
which generate the sigma algebra, the lemma follows by approximation. 
\end{proof}
\subsection{Weak Mixing Stage}
Now we define $S_n$ inductively to ensure 
the final transformation $T$ is weak mixing. Let $S_1$ be the Chacon 
transformation defined on $Y_1$. Suppose $S_n\simeq S$ has been defined on 
$Y_n$. Now we specify the manner in which $S_{n+1}$ should be defined. 
\subsubsection{Local Approximation of Switching Sets}
Choose natural number $k_n>n$ such that for each $i=0,1,\ldots ,h_n-1$, 
there exists a finite collection of indices $\hat{K}_n^i$ and dyadic intervals 
$K_n^i(j)$, $j\in \hat{K}_n^i$, such that $\mu (K_n^i(j))=\frac{1}{2^{k_n}}$ 
and $K_n^i=\bigcup_{j\in \hat{K}_n^i}K_n^i(j)$ satisfies 
$\mu (R_n^iI_n^{\prime}\triangle K_n^i)<
(\frac{\epsilon_n}{h_n})^2\mu (I_n^{\prime})$. 
Let $\hat{G}_n^i=\{j\in \hat{K}_n^i:\mu (R_n^iI_n^{\prime}\cap K_n^i(j))>
(1-\frac{\epsilon_n}{h_n})\mu (K_n^i(j))\}$. 
It is not difficult to show 
$\mu (\bigcup_{j\in \hat{G}_n^i}K_n^i(j))>(1-\frac{\epsilon_n}{h_n})\mu (I_n^{\prime})$.  Set $G_n^i=\bigcup_{j\in \hat{G}_n^i}K_n^i(j)$. 
For each $n\in \natural$, define 
$$
D_n=\bigcup_{\ell=0}^{h_n-1}G_n^{\ell}.
$$
Note that 
$$
\mu (C_n\setminus D_n)<
\sum_{\ell=0}^{h_n-1}\frac{\epsilon_n}{h_n} =\epsilon_n.
$$
Next, we show almost every point falls in infinitely many $D_n$. 
\begin{property}
\label{limsup2}
$\mu (\overline{D})=1$ where $\overline{D}=\bigcap_{n=1}^{\infty}\bigcup_{i=n}^{\infty}D_i$. 
\end{property}
\begin{proof} Given $\epsilon >0$, choose $N=N(\epsilon)\in \natural$ 
such that $\sum_{n=N}^{\infty}\epsilon_n<\epsilon$. 
Thus, 
\begin{eqnarray*}
\mu (\bigcup_{n=N}^{\infty}D_n) &\geq& \mu (\bigcup_{n=N}^{\infty}C_n) 
- \sum_{n=N}^{\infty}\mu (C_n\setminus D_n) \\
&>& 1 - \sum_{n=N}^{\infty}\epsilon_n > 1 - \epsilon.
\end{eqnarray*}
Since $\epsilon$ is arbitrarily small, then 
$\mu (\bigcup_{n=N}^{\infty}D_n)=1$, and Property \ref{limsup2} is establshed.
\end{proof}
\subsubsection{Weak Mixing Component}
The main goal in this work is to demonstrate how properties of a given 
ergodic transformation can be transferred to produce a tailored weak mixing 
transformation. Since the weak mixing component will dissipate over time, 
we do not focus on introducing general properties using $S$. 
Instead, we set $S$ to the Chacon transformation inside our towerplex 
construction. Thus, $S_n$ will be isomorphic to Chacon's transformation. 
By Lemma \ref{ChaLem}, for each $n\in \natural$, there exists 
$m_n\in \natural$ 
such that for each $i=0,1,\ldots ,h_n-1$, $j\in \hat{K}_n^i$ and 
$A=K_n^i(j)$, 
$$
|\mu (S_{n+1}^{H_{m_n}}A\cap A)-\frac12 \mu (A)|<\epsilon_n \mu(A)
$$
and 
$$
|\mu (S_{n+1}^{H_{m_n}}A\cap S^{-1}(A)-\frac12 \mu (A)|<\epsilon_n \mu(A).
$$

\noindent 
Let $w_n=\min{\{\mu (K_{\ell}^i(j))>0:1\leq \ell \leq n, 0\leq i\leq h_n-1, 
j\in \hat{K}_{\ell}^i\}}$. 
Choose $h_{n+1}$ such that 
\begin{eqnarray}
h_{n+1} > \frac{H_{m_n}}{\epsilon_n w_n}.
\end{eqnarray}
\section{Slow Weak Mixing Theorem}
In this final section, we prove our main result using the towerplex constructions. 
First, we give explicit parameters $r_n$ and $s_n$ that 
can be used to generate our rigid weak mixing examples. 
Let $r_n=\frac{\mu (I_n^{\prime})}{\mu (I_n)}=
\frac{1}{2(n+2)}$ and 
$s_n=\frac{\mu (J_n^{\prime})}{\mu (J_n)}=
\frac{1}{2}$.
Thus, each of the switching sets have measure 
$\mu (\bigcup_{i=0}^{h_n-1}R_n^i(I_n^{\prime}))=
{(\mu (X_n)-\mu (X_n^*))}/{2(n+2)}$ 
and 
$\mu (\bigcup_{i=0}^{h_n-1}S_n^i(J_n^{\prime}))=
{(\mu (Y_n)-\mu (Y_n^*))}/2$ 
for $n\in \natural$.  
This implies 
$$\mu (Y_{n+1})=\frac{1}{2(n+2)} [(n+1)\mu (Y_n) + 1] + \kappa_n \epsilon_n$$
where $|\kappa_n|$ is bounded for all $n\in \natural$. 
If all residuals had zero mass, then 
$\kappa_n \epsilon_n = 0$ and by induction: 
$$
\mu (X_n)=\frac{n}{n+1}\mbox{  and  }\mu (Y_n)=\frac{1}{n+1}. 
$$
In the case the residuals are not null, the next lemma obtains
$$
\lim_{n\to \infty}\mu (X_n)=1\mbox{,  }\lim_{n\to \infty}\mu (Y_n)=0.
$$
Parameters given here are called the canonical towerplex settings. 
\begin{lemma}
\label{CanParLem}
If real numbers $\epsilon_n>0$ are chosen sufficiently small for $n\in \natural$,
then a canonical towerplex construction, given by $r_n=\frac{1}{2(n+2)}$ 
and $s_n=\frac{1}{2}$ has the property, for $n\in \natural$, 
\begin{eqnarray}
\label{Ydecay}
\frac{1}{n+2} < \mu (Y_n) < \frac{1}{n}.
\end{eqnarray}
\end{lemma}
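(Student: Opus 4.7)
The plan is to prove the bound by induction on $n$, using the recursion $\mu(Y_{n+1}) = f_n(\mu(Y_n)) + \kappa_n \epsilon_n$ derived in the setup, where $f_n(y) = \frac{(n+1)y + 1}{2(n+2)}$. The base case $\mu(Y_1) = 1/2 \in (1/3,1)$ is immediate from the initial partition. The central structural observation is that $f_n(1/(n+1)) = 1/(n+2)$, so in the absence of error the orbit would follow exactly $y_n = 1/(n+1)$, which sits strictly inside the target interval $(1/(n+2),1/n)$; moreover $f_n$ is increasing with slope $(n+1)/(2(n+2)) < 1/2$, so the iteration is contracting toward this attractor.

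For the inductive step I would evaluate $f_n$ at the endpoints of the induction interval, yielding $f_n(1/(n+2)) = (2n+3)/(2(n+2)^2)$ and $f_n(1/n) = (2n+1)/(2n(n+2))$. Elementary algebra gives
\[f_n\bigl(1/(n+2)\bigr) - 1/(n+3) = \frac{n+1}{2(n+2)^2(n+3)} > 0\]
and, for $n \geq 2$,
\[1/(n+1) - f_n\bigl(1/n\bigr) = \frac{n-1}{2n(n+1)(n+2)} > 0.\]
By monotonicity of $f_n$, these two positive quantities (both of order $1/n^3$) bound how far $f_n(\mu(Y_n))$ can lie from the boundary of $(1/(n+3),1/(n+1))$. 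Choosing $\epsilon_n$ small enough that $|\kappa_n|\epsilon_n$ is strictly less than each of these slacks absorbs the perturbation $\kappa_n \epsilon_n$ and propagates the induction. This requirement is compatible with all earlier constraints on $\epsilon_n$, since those impose only upper bounds.

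The single delicate point, and the main obstacle I anticipate, is the step from $n = 1$ to $n = 2$: the upper slack $(n-1)/(2n(n+1)(n+2))$ vanishes at $n = 1$, because $f_1(1) = 1/2$ meets the upper endpoint exactly. I would bypass this by exploiting the exact value $\mu(Y_1) = 1/2$ rather than the loose interval $(1/3,1)$; then $\mu(Y_2) = f_1(1/2) + \kappa_1 \epsilon_1 = 1/3 + \kappa_1 \epsilon_1$, which lies in $(1/4,1/2)$ as soon as $|\kappa_1|\epsilon_1 < 1/12$. Beyond this edge case, the argument reduces to routine bookkeeping driven by the two slack estimates above.
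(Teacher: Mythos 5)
Your proposal is correct and follows essentially the same strategy as the paper's proof: track the affine map $f_n(y)=\frac{(n+1)y+1}{2(n+2)}$, note its fixed point at $1/(n+3)$ and its slope $<1$, evaluate at the endpoints of the inductive interval, and choose $\epsilon_n$ small enough to absorb the perturbation $\kappa_n\epsilon_n$. Your slack computations $f_n(1/(n+2))-1/(n+3)=\frac{n+1}{2(n+2)^2(n+3)}$ and $1/(n+1)-f_n(1/n)=\frac{n-1}{2n(n+1)(n+2)}$ are both correct, and in fact make quantitative what the paper leaves as ``$\epsilon_n$ sufficiently small.'' You are also right that the upper slack degenerates at $n=1$ and that the exact value $\mu(Y_1)=1/2$ (giving $f_1(1/2)=1/3$) is what rescues the first step; the paper's inductive argument implicitly relies on this but does not single it out, so your explicit treatment of the $n=1$ case is a genuine, if minor, tightening of the published argument.
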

\begin{proof} 
The function $f(y)=({1}/{2(n+2)})[(n+1)y + 1]$ has a fixed point at $y={1}/{(n+3)}$. 
If $y > {1}/{(n+3)}$, then $f(y) > {1}/{(n+3)}$. 
Thus, if $\epsilon_n$ is sufficiently small, and $\mu (Y_n) > {1}/{(n+2)}$, 
then $\mu (Y_{n+1}) > {1}/{(n+3)}$. This establishes the first inequality 
from (\ref{Ydecay}). 

To prove the second inequality, assume $y=\mu (Y_n) < {1}/{n}$ for fixed $n\in \natural$. 
Thus, 
\begin{eqnarray*}
f(y) &<& \frac{1}{2(n+2)} [(n+1)(\frac{1}{n}) + 1]  = \frac{1}{2(n+2)} [2 + \frac{1}{n}]  \\ 
&=& \frac{1}{n+2} + \frac{1}{2n(n+2)}  = \frac{1}{n+1} + \frac{1-n}{2n(n+1)(n+2)} 
\leq \frac{1}{n+1}. 
\end{eqnarray*}
Therefore, if $\epsilon_n$ is sufficiently small, $\mu (Y_{n+1}) <  \frac{1}{n+1}$.
\end{proof}
Now, we are ready to prove our main theorem. 
\begin{theorem}
\label{towerplex}
Given an ergodic measure preserving transformation $R$ on a Lebesgue 
probability space, and a rigid sequence $\rho_n$ for $R$, there exists 
a weak mixing transformation $T$ on a Lebesgue probability space such that 
$T$ is rigid on $\rho_n$. 
\end{theorem}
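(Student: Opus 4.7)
The plan is to apply the full towerplex construction with $R$ the given transformation, $S$ the Chacon transformation, and the canonical parameters $r_n = \frac{1}{2(n+2)}$, $s_n = \frac{1}{2}$ from Lemma \ref{CanParLem}, choosing all auxiliary parameters (tower heights $h_n$, residuals $\epsilon_n$, waiting times, switching sets $C_n$, dyadic approximants $K_n^i(j), G_n^i, D_n$, and Chacon stopping times $H_{m_n}$) to simultaneously satisfy every inequality imposed in the earlier sections. The limit $T = \lim_{n \to \infty} T_n$ then preserves Lebesgue measure, is ergodic by the Ergodicity section, and is rigid along $\rho_n$ by the Rigidity section. What remains is to show $T$ is weak mixing.

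For weak mixing, I show every measurable eigenfunction is constant. Suppose $Tf = \lambda f$ with $|\lambda| = 1$; by ergodicity of $T$, $|f|$ is constant, so I assume $|f| \equiv 1$. The idea is to test the identity $T^{H_{m_n}} f = \lambda^{H_{m_n}} f$ against dyadic cells $A = K_n^i(j)$ with $j \in \hat{G}_n^i$ and $0 \le i < h_n - 1$. After stage $n+1$, such a cell $A$ sits inside the $S_{n+1}$ tower on $Y_{n+1}$, and the tower-height inequality $h_{n+1} > H_{m_n}/(\epsilon_n w_n)$ together with $\sum_n \epsilon_n < \infty$ guarantee that $T^i = S_{n+1}^i$ on $A$ for $0 \le i \le H_{m_n}$ off a set of measure $o(\mu(A))$. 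The choice of $m_n$ and Lemma \ref{ChaLem} then furnish the Chacon dichotomy
$$
\bigl|\mu(S_{n+1}^{H_{m_n}} A \cap A) - \tfrac{\mu(A)}{2}\bigr| < \epsilon_n \mu(A), \qquad \bigl|\mu(S_{n+1}^{H_{m_n}} A \cap S_{n+1}^{-1} A) - \tfrac{\mu(A)}{2}\bigr| < \epsilon_n \mu(A).
$$

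With $k_n$ chosen so that $f$ is essentially a constant $c_A$ on $A$ and a constant $c_A'$ on $S_{n+1}^{-1} A$, each of modulus one, splitting $A$ according to where $T^{H_{m_n}}$ lands and integrating the eigenfunction identity yields
$$
\lambda^{H_{m_n}} = \frac{1 + \bar{c}_A c_A'}{2} + o(1).
$$
Since $|\bar{c}_A c_A'| = 1$ and $|\lambda^{H_{m_n}}| = 1$, the geometry of the unit circle forces $\bar{c}_A c_A' \to 1$, so $c_A' = c_A + o(1)$. On the other hand, the construction places $S_{n+1}(x) = R_n(x)$ on the interior levels of the switching set $C_n$, and $T = S_{n+1}$ on $Y_{n+1}$, so the eigenfunction equation reads $f \circ R_n^{-1} = \lambda^{-1} f$ on those cells, giving $c_A' = \lambda^{-1} c_A + o(1)$. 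Combining the two relations yields $(1 - \lambda^{-1}) c_A = o(1)$, and since $|c_A| = 1$, necessarily $\lambda = 1$. Ergodicity of $T$ then forces $f$ to be constant, so $T$ has no non-trivial eigenvalues and is weak mixing.

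The principal obstacle is the quantitative identification $T^{H_{m_n}} \approx S_{n+1}^{H_{m_n}}$ on the cells of $D_n$. This fuses three ingredients: the tower-height bound $h_{n+1} > H_{m_n}/(\epsilon_n w_n)$ keeps the $H_{m_n}$-orbits of cells inside the $S_{n+1}$ tower, so that the explicit formula for $S_{n+1}$ in terms of the switching construction applies throughout; the decay $\mu(\hat{E}_{n+1}) \to 0$ coming from $\sum_n \epsilon_n < \infty$ bounds the disagreement between $T$ and $T_{n+1}$ within $H_{m_n}$ iterates; and the dyadic approximation $\mu(R_n^i I_n' \triangle K_n^i) < (\epsilon_n/h_n)^2 \mu(I_n')$ certifies that the cells faithfully represent the switching levels. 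Once these bookkeeping pieces align, the eigenfunction computation collapses onto the Chacon joining and forces $\lambda = 1$, completing the proof.
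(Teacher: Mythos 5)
Your overall architecture matches the paper's: the towerplex construction with $S$ the Chacon transformation and canonical parameters, ergodicity and rigidity quoted from the earlier sections, and weak mixing reduced to showing every eigenvalue equals $1$ by testing the eigenfunction against dyadic cells $K_n^i(j)$ and the Chacon joining at times $H_{m_n}$. Your endgame is a legitimate variant of the paper's: where the paper finds points $x_1,x_2$ of a set $\Lambda_\delta$ (on which $f$ has oscillation $<\delta$) returning to the \emph{same} cell at times $H_{m_n}$ and $H_{m_n}+1$ and concludes $|\lambda^{H_{m_n}}-1|<\delta$ and $|\lambda^{H_{m_n}+1}-1|<\delta$, you integrate the eigenfunction identity to get $\lambda^{H_{m_n}}=\frac{1+\bar c_A c_A'}{2}+o(1)$ and exploit the modulus constraint. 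Both routes consume exactly the same two measure estimates from Lemma \ref{ChaLem}.

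The genuine gap is in the sentence ``with $k_n$ chosen so that $f$ is essentially a constant $c_A$ on $A$ and a constant $c_A'$ on $S_{n+1}^{-1}A$.'' The parameters $k_n$ and the cells $K_n^i(j)$ are fixed during the construction of $T$, before any eigenfunction exists; an eigenfunction of the limiting $T$ is merely a measurable function of modulus one, and nothing forces it to be nearly constant on any prescribed dyadic cell. The paper supplies the missing mechanism: pigeonhole the range of $f$ to obtain a positive-measure set $\Lambda_\delta$ with oscillation $<\delta$, take a Lebesgue density point of $\Lambda_\delta$ lying in $\overline D=\bigcap_n\bigcup_{i\geq n}D_i$ (possible because Property \ref{limsup2} gives $\mu(\overline D)=1$ --- this is precisely why the sets $D_n$ and the condition $\sum_n r_n=\infty$ were introduced), and deduce that some cell $A=K_n^i(j)$ with $n$ arbitrarily large satisfies $\mu(\Lambda_\delta\cap A)>(1-2\delta)\mu(A)$. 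Even granting that, your argument also needs near-constancy of $f$ on $S_{n+1}^{-1}A$, which does \emph{not} follow from the density-point argument (that set is a different level of the Chacon tower and need not meet $\Lambda_\delta$ in large measure); it has to be derived from the eigenfunction equation itself, $f=\lambda^{-1}\,f\circ T$ restricted to $T^{-1}A$, so your relation $c_A'=\lambda^{-1}c_A$ is really the definition of $c_A'$ rather than an independent second equation. Once that is made explicit your computation reads $\lambda^{H_{m_n}}=\frac{1+\lambda^{-1}}{2}+o(1)$, whose unit modulus forces $\lambda=1$; the argument is therefore repairable, but the Lusin/density-point step is indispensable and absent as written.
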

\begin{proof} Much of the details have been established in the previous 
sections. In particular, the conditions imposed 
in each of the sections on ergodicity, rigidity and weak mixing, are 
consistent.  Essentially, $\epsilon_n\to 0$ arbitrarily fast which 
is possible since only the extra mass from successive Rokhlin towers 
is bounded by $\epsilon_n$. Also, each section imposes a lower bound 
on the growth rate of the tower heights $h_n$, but no upper bound. 
Appendix \ref{conditions} lists conditions that can be 
used to support the explicit proofs. 
Below, we need to complete the argument that $T$ is weak mixing. 

Suppose $f\neq 0$ is an eigenfunction for $T$ with eigenvalue $\lambda$. 
Since we established that $T$ is ergodic, we may assume 
$|f|$ is a constant. Without loss of generality, assume $|f|=|\lambda|=1$. 
Given $\delta >0$, there exists a set 
$\Lambda_{\delta}$ of positive measure such that for $x,y\in \Lambda_{\delta}$, 
$|f(x)-f(y)|<\delta$. Let $\Lambda_{\delta}^{\prime}$ be the 
set of Lebesgue density points of $\Lambda_{\delta}$. 
In particular, if 
$\Lambda_{\delta}^{\prime}=\{x\in \Lambda_{\delta}: 
\lim_{\eta \to 0}\frac{\mu (\Lambda_{\delta}\cap (x-\eta,x+\eta))}{2\eta}=1\}$,
then 
$\mu (\Lambda_{\delta}^{\prime})=\mu (\Lambda_{\delta})>0$. 
Choose $x\in \Lambda_{\delta}^{\prime}\cap \overline{D}$. 
Choose $\eta^{\prime}>0$ such that for $\eta <\eta^{\prime}$, 
$\frac{\mu (\Lambda_{\delta}\cap (x-\eta,x+\eta))}{2\eta}>1-\delta$. 
Choose $n\in \natural$ such that $\frac{1}{2^{k_n}}<\eta^{\prime}$, 
$\sum_{i=n}^{\infty}\epsilon_i <\delta$ and $x\in D_n$. 
There exists $i=i(x)$ such that $x\in G_n^i$, and subsequently 
$j=j(x)$ such that $x\in K_n^i(j)$. 
Let $\eta_x = \max{\{|y-x|:y\in K_n^i(j)\}}$. Note $\eta_x <\eta^{\prime}$, 
and $\frac{\mu (\Lambda_{\delta}\cap (x-\eta_x,x+\eta_x))}{2\eta_x}>1-\delta$.
Thus, 
\begin{eqnarray}
\mu (\Lambda_{\delta}\cap K_n^i(j))&>&\mu (K_n^i(j))-2\eta_x \delta 
\mu (K_n^i(J)) - 2\delta \mu(K_n^i(j)) \\ 
&\geq& (1-2\delta)\mu (K_n^i(j)). 
\end{eqnarray}
Hence, 
\begin{eqnarray*}
&&|\mu (S_{n+1}^{H_{m_n}}(\Lambda_{\delta}\cap K_n^i(j))\cap (\Lambda_{\delta}\cap K_n^i(j))) - \frac12 \mu (\Lambda_{\delta}\cap K_n^i(j))| \\
&\leq& |\mu (S_{n+1}^{H_{m_n}}(\Lambda_{\delta}\cap K_n^i(j))\cap (\Lambda_{\delta}\cap K_n^i(j))) - \mu (S_{n+1}^{H_{m_n}}(K_n^i(j))\cap K_n^i(j))| \\ 
&+& |\mu (S_{n+1}^{H_{m_n}}(K_n^i(j))\cap (K_n^i(j))) - \frac12 \mu (K_n^i(j))| \\ 
&+& |\frac12 \mu (K_n^i(j)) - 
\frac12 \mu (\Lambda_{\delta}\cap K_n^i(j))| \\ 
&<& 4\delta \mu (K_n^i(j)) + \epsilon_n\mu (K_n^i(j)) + \delta \mu (K_n^i(j)) = (5\delta +\epsilon_n)\mu (K_n^i(j)).
\end{eqnarray*}
We wish to establish that $T$ is weak mixing, and $T$ does not equal $S_{n+1}$ 
everywhere.  In particular, $T$ may differ from $S_{n+1}$ on the top levels 
of the towers of height $h_{n+1}, h_{n+2}, \ldots$, on 
the accompanying residuals, and on the transfer sets. 
However, we have chosen the growth 
of the tower heights sufficient to ensure the set where $T$ and $S_{n+1}$ 
may differ will be small relative to interval, $K_n^i(j)$. 
Thus, 
\begin{eqnarray}
\mu (\{x\in Y_{n+1}:Tx\neq S_{n+1}x\})&<&
\sum_{i=n+1}^{\infty}[\frac{1}{h_{i}} + 4\epsilon_{i}] \\ 
&<& \sum_{i=n}^{\infty}[\frac{5\epsilon_iw_n}{H_{m_i}+1}].
\end{eqnarray}
This implies 
\begin{eqnarray*}
\mu (\{x\in Y_{n+1}&:&T^ix\neq S_{n+1}^ix, i=1,2,\ldots ,H_{m_n}+1\}) \\ 
&<& w_n(H_{m_n}+1)\sum_{i=n}^{\infty}\frac{5\epsilon_i}{H_{m_i}+1} 
<5w_n\sum_{i=n}^{\infty}\epsilon_i < 5\delta w_n.
\end{eqnarray*}
Hence,
\begin{eqnarray*}
&&|\mu (T^{H_{m_n}}(\Lambda_{\delta}\cap K_n^i(j))\cap (\Lambda_{\delta}\cap K_n^i(j))) - \frac12 \mu (\Lambda_{\delta}\cap K_n^i(j))| \\
&\leq& 
|\mu (T^{H_{m_n}}(\Lambda_{\delta}\cap K_n^i(j))\cap (\Lambda_{\delta}\cap K_n^i(j))) \\ 
&-& \mu (S_{n+1}^{H_{m_n}}(\Lambda_{\delta}\cap K_n^i(j))\cap (\Lambda_{\delta}\cap K_n^i(j)))| \\ 
&+&
|\mu (S_{n+1}^{H_{m_n}}(\Lambda_{\delta}\cap K_n^i(j))\cap (\Lambda_{\delta}\cap K_n^i(j))) - \frac12 \mu (\Lambda_{\delta}\cap K_n^i(j))| \\ 
&<& 5\delta w_n + (5\delta +\epsilon_n)\mu (K_n^i(j)) \leq 
(10\delta +\epsilon_n)\mu (K_n^i(j)).
\end{eqnarray*}

For $\delta$ and $\epsilon$ sufficiently small, 
there exists $x_1\in \Lambda_{\delta}\cap K_n^i(j)$ such that 
$T^{H_{m_n}}x_1\in \Lambda_{\delta}\cap K_n^i(j)$, and 
$x_2\in \Lambda_{\delta}\cap K_n^i(j)$ such that 
$T^{H_{m_n}+1}x_2\in \Lambda_{\delta}\cap K_n^i(j)$. 
Thus, 
\begin{eqnarray}
|\lambda^{H_{m_n}}f(x_1)-f(x_1)|=|f(T^{H_{m_n}}x_1)-f(x_1)| < \delta,
\end{eqnarray}
and
\begin{eqnarray}
|\lambda^{H_{m_n}+1}f(x_2)-f(x_2)|=|f(T^{H_{m_n}+1}x_2)-f(x_2)| < \delta.
\end{eqnarray}
Hence, 
$$
|\lambda^{H_{m_n}}-1| < \frac{\delta}{|f(x_1)|} = \delta\mbox{  and  }
|\lambda^{H_{m_n}+1}-1| < \frac{\delta}{|f(x_2)|} = \delta.
$$
Therefore, 
$$
|\lambda -1|=
|\lambda^{H_{m_n}+1}-\lambda^{H_{m_n}}| \leq 
|\lambda^{H_{m_n}+1}-1| + 
|\lambda^{H_{m_n}}-1| < 2\delta.
$$
Since $\delta >0$ may be chosen arbitrarily small, then $\lambda =1$. 
Since it was established that $T$ is ergodic in an earlier section, 
then $f$ must be a constant. Therefore, $T$ is weak mixing. 
\end{proof}
Our theorem establishes the following corollaries 
which answer questions raised in the ground-breaking works 
\cite{BdJLR} and \cite{EisGri}.
\begin{corollary}
\label{discrete}
Given any ergodic measure preserving transformation $R$ on a Lebesgue 
probability space with discrete spectrum, 
and a rigidity sequence $\rho_n$ for $R$, 
there exists a weak mixing transformation $T$ with 
rigidity sequence $\rho_n$. In particular, for any 
$k\in \natural, k\geq 2$, there exists a weak mixing transformation 
with $k^n$, $n\in \natural$, as a rigidity sequence. 
\end{corollary}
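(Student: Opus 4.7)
The corollary is essentially a packaging of Theorem \ref{towerplex} applied to specific choices of $R$, so the plan is to simply supply the right inputs and invoke the theorem.

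For the general statement, I would begin by fixing an ergodic measure preserving transformation $R$ with discrete spectrum on a Lebesgue probability space together with a rigidity sequence $\rho_n$. Then Theorem \ref{towerplex} directly yields a weak mixing transformation $T$ on a Lebesgue probability space that is rigid along $\rho_n$. There is nothing to prove beyond this citation, since the theorem does not require any special hypothesis on $R$ beyond ergodicity, and rigidity sequences of discrete spectrum transformations are certainly rigidity sequences of ergodic transformations.

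For the \emph{in particular} clause, the task reduces to exhibiting, for each integer $k \geq 2$, some ergodic measure preserving transformation with discrete spectrum having $k^n$ as a rigidity sequence; Theorem \ref{towerplex} then produces the desired weak mixing rigid example. The natural choice is the $k$-adic odometer $T \colon \mathbb{Z}_k \to \mathbb{Z}_k$, $T(x) = x+1$, equipped with Haar measure. This transformation is ergodic with purely discrete spectrum (its eigenvalues are the $k^n$-th roots of unity). Since $T^{k^n}(x) = x + k^n$ and $k^n \to 0$ in the $k$-adic metric, we have $T^{k^n} \to \mathrm{id}$ uniformly, which trivially implies $\mu(T^{k^n}A \triangle A) \to 0$ for every measurable $A$. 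Hence $k^n$ is a rigidity sequence for the $k$-adic odometer, and applying the first part of the corollary produces a weak mixing transformation rigid along $k^n$.

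Since both parts reduce to a direct application of Theorem \ref{towerplex}, there is no real obstacle beyond recording the two inputs; the only nontrivial point is identifying a standard ergodic discrete spectrum model with the prescribed rigidity, which the odometer (or equivalently a Liouville-type rotation adapted to $k$) handles cleanly.
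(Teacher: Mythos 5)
Your proposal is correct and supplies exactly the routine derivation that the paper omits (the paper simply states the corollary as an immediate consequence of Theorem \ref{towerplex} without giving a separate proof). The general statement follows by noting that a discrete-spectrum ergodic transformation is in particular ergodic, so the theorem applies verbatim; the \emph{in particular} clause requires only the observation that the $k$-adic odometer is an ergodic discrete-spectrum transformation with $k^n$ as a rigidity sequence, which your cylinder-set argument establishes cleanly. This matches the intended reading of the corollary and is the natural witness one would choose.
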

The next corollary gives an explicit characterization of "large" rigid sequences 
for weak mixing transformations. 
While this corollary appears known in \cite{AarHosLem12}, 
our characterization gives a general concrete method for establishing 
"large" rigidity sequences of weak mixing transformations. 
Given a sequence $\mathcal{A}$, 
define the density function $g_{\mathcal{A}}:\natural \to [0,1]$ 
such that $g_{\mathcal{A}}(k)={\#(\mathcal{A} \cap \{1,2,\ldots k\})} / {k}$. 
\begin{corollary}
\label{slowrigid}
Given any real-valued function $f:\natural \to (0,\infty)$ such that 
$$\lim_{n\to \infty}f(n)=0,$$ there exists a weak mixing transformation 
with rigidity sequence $\mathcal{A}$ such that 
$$
\lim_{n\to \infty}\frac{f(n)}{g_{\mathcal{A}}(n)}=0.
$$
Also, there exist weak mixing transformations with rigidity sequences 
$\rho_n$ satisfying 
$$
\lim_{n\to \infty}\frac{\rho_{n+1}}{\rho_n}=1.
$$
\end{corollary}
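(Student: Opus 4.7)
The plan rests on Theorem \ref{towerplex}: having established that \emph{any} rigidity sequence for an ergodic transformation is realized by some weak mixing transformation, it suffices to exhibit, for each of the two claims, one ergodic transformation whose rigidity sequence has the required property. Circle rotations $R_\alpha$ by Liouville numbers are the most flexible source of such sequences, via the classical equivalence that $\rho_n$ is a rigidity sequence for $R_\alpha$ if and only if $\|\rho_n \alpha\| \to 0$, where $\|\cdot\|$ denotes distance to the nearest integer.

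Fix a rapidly growing sequence of positive integers $q_k$ and choose $\alpha$ irrational so that $\|q_k\alpha\| < \delta_k$ for some $\delta_k$ that tends to $0$ extremely fast (standard: take $\alpha$ as the limit of convergents $p_k/q_k$ with partial quotients large enough to force any prescribed rate). With an auxiliary sequence $J_k\to\infty$, let $\mathcal{A}$ be the ordered union
$$
\mathcal{A} \;=\; \bigcup_{k\geq 1}\bigl\{\,j\,q_k : J_k \leq j \leq J_{k+1}\,q_{k+1}/q_k\,\bigr\}.
$$
Every element $j q_k\in\mathcal{A}$ satisfies $\|j q_k\alpha\| \leq j\delta_k \leq J_{k+1}q_{k+1}\delta_k$, which is forced to zero by picking $\delta_k \ll 1/(J_{k+1}q_{k+1})$; hence $\mathcal{A}$ is a rigidity sequence for $R_\alpha$, to which Theorem \ref{towerplex} applies.

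For the second assertion, within scale $k$ consecutive elements of $\mathcal{A}$ differ by exactly $q_k$, giving ratios $(j+1)/j$ with $j\geq J_k\to\infty$, which tend to $1$. At the junction between scales the endpoint $J_{k+1}q_{k+1}$ of scale $k$ coincides with the starting element of scale $k+1$; its predecessor $J_{k+1}q_{k+1}-q_k$ yields ratio $1/(1 - q_k/(J_{k+1}q_{k+1}))\to 1$ and its successor $(J_{k+1}+1)q_{k+1}$ yields ratio $1+1/J_{k+1}\to 1$. For the first assertion, a direct count gives $g_{\mathcal{A}}(J_{k+1}q_{k+1}) \gtrsim 1/q_k$, with a comparable lower bound at intermediate $N$; given any $f\to 0$ one therefore recursively picks $q_{k+1}$ (and hence $J_{k+1}$) so large that $f(n) \leq 1/(k\,q_k)$ for every $n$ in the $k$-th block, which yields $f/g_{\mathcal{A}} \to 0$.

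The main obstacle is purely bookkeeping: one must simultaneously control four recursively defined parameters --- $q_k$ must grow fast enough that a Liouville $\alpha$ with the required approximation exists, $\delta_k$ must shrink fast enough that $J_{k+1}q_{k+1}\delta_k\to 0$, $J_k$ must tend to infinity so that the ratio-to-$1$ condition survives the beginning of each scale, and for part 1 the growth of $q_k$ must be driven by the target function $f$. There is no compatibility obstruction, since at stage $k$ one chooses $q_k$ first (using $f$), then $J_k$, then $\delta_k$ small enough to accommodate the scale-$k+1$ endpoint; the continued-fraction limit defining $\alpha$ is then well-posed and the resulting $\mathcal{A}$ has all the required properties.
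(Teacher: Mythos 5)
Your high-level reduction is the same as the paper's: both proofs produce the required sequences as rigidity sequences of an irrational rotation and then invoke Theorem \ref{towerplex}. Your treatment of the second assertion (ratios tending to $1$) can be made to work. The problem is the first assertion, where your block construction has a gap that I believe is fatal, not just bookkeeping. First, the stated sufficient condition is unachievable: you ask for $\|q_k\alpha\|=\delta_k\ll 1/(J_{k+1}q_{k+1})$, but if $Q$ denotes the first continued-fraction denominator of $\alpha$ exceeding $q_k$, the best-approximation inequality gives $\|q_k\alpha\|>1/(2Q)$, so your condition forces $Q>J_{k+1}q_{k+1}/2\geq q_{k+1}$. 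Then $q_{k+1}$ lies in the same denominator gap as $q_k$, and by induction every $q_j$ lies below the single denominator $Q$, contradicting $q_k\to\infty$. Replacing your estimate by the correct one ($j\leq J_{k+1}q_{k+1}/q_k$, giving the bound $J_{k+1}\|q_k\alpha\|q_{k+1}/q_k$) rescues the rigidity claim when $J_{k+1}=o(q_k)$, but then the density claim breaks: near any $N$ in block $k$ your set is locally an arithmetic progression of common difference $q_k$, so $g_{\mathcal{A}}(N)\approx 1/q_k$, and $f(N)/g_{\mathcal{A}}(N)\to 0$ requires $f(N)=o(1/q_k)$ throughout block $k$. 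For slowly decaying $f$ (say $f(n)=1/\log n$) this forces block $k$ to begin at $B_k\geq e^{cq_k}$; to keep $g_{\mathcal{A}}$ from collapsing before $B_k$, block $k-1$ must then contain on the order of $B_k/q_k$ terms; but an arithmetic progression of difference $q$ and length $L$ consisting entirely of rigidity times for $R_\alpha$ forces $L\|q\alpha\|=O(1)$, hence $\|q_{k-1}\alpha\|\lesssim q_k e^{-cq_k}$, and the same best-approximation argument again traps all subsequent $q_j$ below one convergent denominator. No ordering of the parameter choices escapes this; the obstruction is arithmetic, not organizational.

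The paper's construction avoids the issue entirely, and it is worth seeing why. It takes $\mathcal{A}$ to be assembled from the Bohr sets $\mathcal{A}(\epsilon)=\{j:|e^{2\pi i\alpha j}-1|<\epsilon\}$ for a single arbitrary irrational $\alpha$ (no Liouville condition), using that each $\mathcal{A}(\epsilon)$ has positive density roughly proportional to $\epsilon$, \emph{uniformly in scale and independent of the arithmetic of $\alpha$}, and then letting $\epsilon$ decrease to $0$ at a rate dictated by $f$: one waits until $f(j)$ has fallen well below the density of $\mathcal{A}(1/2^i)$ before switching from $1/2^i$ to $1/2^{i+1}$. Because the density of the building blocks does not degenerate the way $1/q_k$ does, the diagonalization beats every $f\to 0$, however slow. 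If you want to keep your explicit flavor, replace each block $\{jq_k\}$ by the full set $\{m:\|m\alpha\|<\epsilon_k\}$ on the corresponding range; that is essentially the paper's proof.
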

\begin{proof} Let $\alpha$ be an irrational number and 
$R_{\alpha}$ the rotation by $2\pi \alpha$ on the unit circle. 
Given $\epsilon >0$, define $\mathcal{A}(\epsilon )=
\{ j\in \natural:| \exp{(2\pi \alpha j)} -1|<\epsilon \}$, and 
for $n\in \natural$, define  $\mathcal{A}(\epsilon, n )=
\mathcal{A}(\epsilon )\cap \{1,2,\ldots n\}$. 
For $\bar{\epsilon} = \{\epsilon_1<\epsilon_2<\ldots \}$, let 
$\mathcal{A}(\bar{\epsilon}) =\bigcup_{n=1}^{\infty}\mathcal{A}(\epsilon_n, n)$. 
If $\lim_{n\to \infty}\epsilon_n=0$ and $\mathcal{A}(\bar{\epsilon})$ is infinite, 
then $\mathcal{A}(\bar{\epsilon})$ forms a rigidity sequence for $R_{\alpha}$. 
Let $f:\natural \to (0,\infty)$ be such that $\lim_{n\to \infty}f(n)=0$. 
Since $\mathcal{A} (1/2^i)$ has positive density for $i\in \natural$, 
there exists $j_i\in \natural$ 
such that 
$$
\frac{|\mathcal{A}( {1}/{2^i}, j )|}{j} > f(j). 
$$
for all $j\geq j_i$. 
For $k\in \natural$, choose $i=i_k\in \natural$ such that 
$j_i +1\leq k\leq j_{i+1}$. Set $\epsilon_k = \frac{1}{2^i}$ 
and let $\mathcal{A}=\mathcal{A}(\bar{\epsilon})$. Thus, 
\begin{eqnarray}
g_{\mathcal{A}}(k)&=& \frac{|\mathcal{A} \cap \{1,2,\ldots ,k\}|}{k} \\ 
&\geq& \frac{|\mathcal{A}(\epsilon_{k},k)|}{k} > 2^i f(k). 
\end{eqnarray}
Hence, 
\begin{eqnarray}
\frac{f(k)}{g_{\mathcal{A}}(k)} &>& \frac{1}{2^i}. 
\end{eqnarray}
This confirms that $\lim_{k\to \infty} {f(k)}/{g_{\mathcal{A}}(k)}=0$ 
for the rigidity sequence $\mathcal{A}$. 
Therefore, by Theorem \ref{towerplex}, $\mathcal{A}$ is a rigidity sequence 
for a weak mixing transformation.  
The second assertion of Corollary \ref{slowrigid} can be established 
in a similar manner. Since ergodic rotations on the unit circle have 
rigid sequences $\rho_{n}$ such that 
$\lim_{n\to \infty} {\rho_{n+1}}/{\rho_n} = 1$, then weak mixing transformations 
admit these rigid sequences as well. 
\end{proof}
Previously, it was established that denominators from convergents 
of continued fractions serve as rigidity sequences for 
weak mixing transformations. A partial result was provided 
in \cite{EisGri} for restricted convergents, and then a general 
result was established in \cite{BdJLR}. 
In this paper, we extend these results to show that 
any rigidity sequence for an ergodic rotation on the unit circle is also 
a rigidity sequence for a weak mixing transformation. 
This includes sequences $q_n$ formed from the denominators 
of convergents ${p_n}/{q_n}$ of an irrational $\alpha$. 
\begin{corollary}
\label{rotations}
Let $\alpha\in (0,1)$ be any irrational number, and let 
$\rho_n$ be a sequence of natural numbers satisfying 
$$
\lim_{n\to \infty} |\exp{(2\pi i\alpha \rho_n)}-1|=0.
$$
Then there exists a weak mixing transformation $T$ such that 
$\rho_n$ is a rigidity sequence for $T$. 
\end{corollary}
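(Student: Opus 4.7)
The plan is to reduce the corollary directly to Theorem \ref{towerplex} by producing a single ergodic transformation $R$ for which the given sequence $\rho_n$ is already a rigidity sequence. The natural candidate is $R=R_\alpha$, the rotation by $2\pi\alpha$ on the unit circle with Haar (Lebesgue) measure. Since $\alpha\in(0,1)$ is irrational, $R_\alpha$ is a well-known ergodic measure preserving transformation on a Lebesgue probability space, so it is an admissible input to the towerplex construction.

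The key step is verifying that $\rho_n$ is a rigidity sequence for $R_\alpha$. First I would observe that $R_\alpha^{\rho_n}$ is itself a rotation, namely by $2\pi\alpha\rho_n$, and the hypothesis $|\exp(2\pi i\alpha\rho_n)-1|\to 0$ says exactly that $\alpha\rho_n\to 0\pmod 1$. Hence on the group of characters $\{e^{2\pi i k x}\}_{k\in\mathbb{Z}}$ of the circle the Koopman operator of $R_\alpha^{\rho_n}$ converges pointwise to the identity, so $R_\alpha^{\rho_n}\to\mathrm{Id}$ in the weak operator topology on $L^2$. Translating back to sets, for any arc $A$ the symmetric difference $R_\alpha^{\rho_n}A\triangle A$ has Lebesgue measure going to $0$, because translating $A$ by an amount tending to $0$ changes it by a negligible set; approximating an arbitrary measurable $A$ by finite unions of arcs (which are dense in the measure algebra) then yields $\mu(R_\alpha^{\rho_n}A\triangle A)\to 0$ for every measurable $A$. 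This is precisely the definition of $\rho_n$ being a rigidity sequence for $R_\alpha$.

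Once this verification is done, the corollary is immediate: apply Theorem \ref{towerplex} to the ergodic measure preserving transformation $R=R_\alpha$ and the rigidity sequence $\rho_n$ to produce a weak mixing transformation $T$ on a Lebesgue probability space that is rigid on $\rho_n$. No further towerplex-level work is required, since all the heavy lifting (the weak mixing construction and the preservation of the specified rigidity sequence) has already been carried out in the proof of Theorem \ref{towerplex}.

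The only substantive point is the characterization of rigidity for the rotation, and that is entirely routine; there is no real obstacle. If any subtlety arises, it is the approximation argument needed to pass from arcs to arbitrary measurable sets, but this is standard and follows from density of the algebra of finite unions of arcs in the Lebesgue $\sigma$-algebra together with the contractive action of $R_\alpha^{\rho_n}$ on symmetric differences.
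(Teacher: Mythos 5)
Your proposal is correct and is exactly the argument the paper intends: the hypothesis says $\rho_n$ is a rigidity sequence for the ergodic rotation $R_\alpha$ (verified by the standard character/arc-approximation argument you give), and Theorem \ref{towerplex} then produces the weak mixing $T$ rigid along $\rho_n$. The paper leaves this reduction implicit, so your write-up simply supplies the routine details.
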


\noindent
{\bf\Large  Acknowledgements} \\
%The author acknowledges the D.C. Beltway for providing countless hours of driving (slowly) in a big loop with plenty of time for %ergodic things $\ddot\smile$  
The author wishes to thank Joseph Rosenblatt, Andrew Parrish, Ayse Sahin, Karl Petersen, Nathaniel Friedman, Cesar Silva, Keri Kornelson and Tatjana Eisner for feedback on a previous version of this article. 

\appendix 
\section{Towerplex Pictorial}
This appendix provides an illustration of towers 
for $R_1$, $S_1$, and the multiplexing operation applied to obtain
towers for $R_2$ and $S_2$.  The picture below represents only the 
case where $d_R > 0$ and $d_S < 0$. The other cases are 
handled as described in the section on towerplex constructions. 
Also, the general case of deriving $R_{n+1}$ and $S_{n+1}$ 
from $R_n$ and $S_n$ is analogous to the initial multiplexing operation 
for deriving $R_2$ and $S_2$. 

\begin{tikzpicture}
{[line width=2pt]
{[black]
\draw (1+.5,0) -- (1+2.25,0) node[near start,above] {$I_1$};
\draw (1+.5,.75) -- (1+2.25,.75);
\draw (1+.5,1.5) -- (1+2.25,1.5);
{[line width=1pt]
\draw[|->] (1+1.50,1.75) -- (1+1.50,2.75);
}
\node at (1+2,2.25) {$R_1$};
\draw (1+.5,3) -- (1+2.25,3);
\draw (1+.5,3.75) -- (1+2.25,3.75);
\draw (1+.5,4.5) -- (1+2.25,4.5);
}
}
{[line width=4pt]
{[red]
\draw (1+2.25,0) -- (1+3,0) node[near end,below]{$r_1$} node[near end,above]{$I_1^{\prime}$};
\draw (1+2.25,.75) -- (1+3,.75);
\draw (1+2.25,1.5) -- (1+3,1.5);
{[line width=1.5pt]
{[black]
\draw[loosely dotted] (1+2.5,1.75) -- (1+2.5,2.75);
}
}
\draw (1+2.25,3) -- (1+3,3);
\draw (1+2.25,3.75) -- (1+3,3.75);
\draw (1+2.25,4.5) -- (1+3,4.5);
}
}
\shade [ball color=blue] (1+2.25,5) circle (.16) node[](residual){};
\draw[->] (1+3.0,5)--(1+2.5,5) node[blue](myarrow1){};
\node[blue] at (1+3.25,4.9) {$X_1^*$};
\node[black,left=of residual]{$X_1$};
% end R tower
% start S tower
{[line width=2pt]
{[black]
\draw (1+8.5,0) -- (1+10.25,0) node[near start,above] {$J_1$};
\draw (1+8.5,.75) -- (1+10.25,.75);
\draw (1+8.5,1.5) -- (1+10.25,1.5);
{[line width=1pt]
\draw[|->] (1+9.50,1.75) -- (1+9.50,2.75);
}
\node at (1+10,2.25) {$S_1$};
\draw (1+8.5,3) -- (1+10.25,3);
\draw (1+8.5,3.75) -- (1+10.25,3.75);
\draw (1+8.5,4.5) -- (1+10.25,4.5);
}
}
{[line width=4pt]
{[green]
\draw (1+10.25,0) -- (1+11,0) node[near start,below] {$s_1$} node[near start,above]{$J_1^{\prime}$};
\draw (1+10.25,.75) -- (1+11,.75);
\draw (1+10.25,1.5) -- (1+11,1.5);
{[line width=1.5pt]
{[black]
\draw[loosely dotted] (1+10.5,1.75) -- (1+10.5,2.75);
}
}
\draw (1+10.25,3) -- (1+11,3);
\draw (1+10.25,3.75) -- (1+11,3.75);
\draw (1+10.25,4.5) -- (1+11,4.5);
}
}
{[line width=4pt]
{[blue]
\draw (1+11,0) -- (1+11.5,0) node[below]{$d_R/h_1$} node[near start,above] {$\supset I_1^*$};
\draw (1+11,.75) -- (1+11.5,.75);
\draw (1+11,1.5) -- (1+11.5,1.5);
{[line width=1.5pt]
{[black]
\draw[loosely dotted] (1+11.25,1.75) -- (1+11.25,2.75);
}
}
\draw (1+11,3) -- (1+11.5,3);
\draw (1+11,3.75) -- (1+11.5,3.75);
\draw (1+11,4.5) -- (1+11.5,4.5);
}
}
\shade [ball color=blue] (1+9.75,5) circle (.20) node[](residual){};
\draw[->] (1+10.5,5)--(1+10.0,5) node[blue](myarrow1){};
\node[blue] at (1+10.75,4.9) {$Y_1^*$};
\node[black,left=of residual]{$Y_1$};
\end{tikzpicture}

Transformations $R_2$ and $S_2$ are derived from $R_1$ and $S_1$ by 
switching the red subcolumn with the green subcolumn. We refer to 
these sets as the switching sets, and are the main multiplexing operation. 
In order to preserve maps isomorphic to $R$ and $S$, and avoid 
redefining $R_1$ or $S_1$ on most of the probability space, it may 
be necessary to transfer measure between the towers and residuals. 
This is a rescaling operation, and these sets are referred to as 
transfer sets. In the case where $d_R>0$, the blue colored subcolumn $I_1^*$ 
from $J_1^{\prime}\subset Y_1$ is absorbed into $X_1^{\prime}$. For $d_S < 0$, 
mass is removed from $Y_1^*$ and added as a blue subcolumn to define 
$S_2$.

\begin{tikzpicture}
{[line width=2pt]
{[black]
\draw (1+.5,0) -- (1+2.25,0) node[near start,above] {$I_1$};
\draw (1+.5,.75) -- (1+2.25,.75);
{[line width=1pt]
\draw[->] (1+.75,.9) -- (1+.75,1.35);
}
\node at (1+1.1,1.125) {$R_1$};
\draw (1+.5,1.5) -- (1+2.25,1.5);
{[line width=1pt]
\draw[|->] (1+1.50,1.75) -- (1+1.50,2.75);
}
\node at (1+2,2.25) {$R_2$};
\draw (1+.5,3) -- (1+2.25,3);
\draw (1+.5,3.75) -- (1+2.25,3.75);
\draw (1+.5,4.5) -- (1+2.25,4.5);
}
}
{[line width=4pt]
{[green]
\draw (1+2.25,0) -- (1+3,0) node[near end,below] {$s_1$} node[near end,above]{$J_1^{\prime}\setminus{I_1^*}$};
\draw (1+2.25,.75) -- (1+3,.75);
{[line width=1pt]
\draw[->] (1+2.5,.9) -- (1+2.5,1.35);
}
\node at (1+2.85,1.125) {$S_1$};
\draw (1+2.25,1.5) -- (1+3,1.5);
{[line width=1.5pt]
{[black]
\draw[loosely dotted] (1+2.5,1.75) -- (1+2.5,2.75);
}
}
\draw (1+2.25,3) -- (1+3,3);
\draw (1+2.25,3.75) -- (1+3,3.75);
\draw (1+2.25,4.5) -- (1+3,4.5);
}
}
\shade [ball color=blue] (1+1.75,5) circle (.22) node[](residual){};
\draw[->] (1+2.5,5)--(1+2.1,5) node[blue](myarrow1){};
\node[blue] at (1+2.75,4.9) {$X_1^{\prime}$};
\node[black,left=of residual]{$X_2$};
% end R tower
% start S tower
{[line width=4pt]
{[blue]
\draw (1+8,0) -- (1+8.5,0) node[below]{$d_S/h_1$} node[near start,above] {$J_1^*$};
\draw (1+8,.75) -- (1+8.5,.75);
\draw (1+8,1.5) -- (1+8.5,1.5);
{[line width=1.5pt]
{[black]
\draw[loosely dotted] (1+8.25,1.75) -- (1+8.25,2.75);
}
}
\draw (1+8,3) -- (1+8.5,3);
\draw (1+8,3.75) -- (1+8.5,3.75);
\draw (1+8,4.5) -- (1+8.5,4.5);
}
}
{[line width=2pt]
{[black]
\draw (1+8.5,0) -- (1+10.25,0) node[midway,above] {$J_1$};
\draw (1+8.5,.75) -- (1+10.25,.75);
{[line width=1pt]
\draw[->] (1+8.95,.9) -- (1+8.95,1.35);
}
\node at (1+9.3,1.125) {$S_1$};
\draw (1+8.5,1.5) -- (1+10.25,1.5);
{[line width=1pt]
\draw[|->] (1+9.50,1.75) -- (1+9.50,2.75);
}
\node at (1+10,2.25) {$S_2$};
\draw (1+8.5,3) -- (1+10.25,3);
\draw (1+8.5,3.75) -- (1+10.25,3.75);
\draw (1+8.5,4.5) -- (1+10.25,4.5);
}
}
{[line width=4pt]
{[red]
\draw (1+10.25,0) -- (1+11,0) node[near end,below] {$r_1$} node[near end,above]{$I_1^{\prime}$};
\draw (1+10.25,.75) -- (1+11,.75);
{[line width=1pt]
\draw[->] (1+10.5,.9) -- (1+10.5,1.35);
}
\node at (1+10.85,1.125) {$R_1$};
\draw (1+10.25,1.5) -- (1+11,1.5);
{[line width=1.5pt]
{[black]
\draw[loosely dotted] (1+10.5,1.75) -- (1+10.5,2.75);
}
}
\draw (1+10.25,3) -- (1+11,3);
\draw (1+10.25,3.75) -- (1+11,3.75);
\draw (1+10.25,4.5) -- (1+11,4.5);
}
}
\shade [ball color=blue] (1+9.75,5) circle (.16) node[](residual){};
\draw[->] (1+10.5,5)--(1+10.1,5) node[blue](myarrow1){};
\node[blue] at (1+10.75,4.9) {$Y_1^{\prime}$};
\node[black,left=of residual]{$Y_2$};
\end{tikzpicture}

\section{Towerplex Conditions}\label{conditions}
Below is a list of explicit conditions that can be used to prove theorem 
\ref{towerplex}. 
\begin{enumerate}
\item $\lim_{n\to \infty}r_n= 0$;
\item $\sum_{n=1}^{\infty}r_n=\sum_{n=1}^{\infty}s_n=\infty$; 
\item $\lim_{n\to \infty}\mu (Y_n)=0$;
\item ${\epsilon_{n}}/{\max{\{M_n^1,M_n^{2}\}}} < \epsilon_{n+1}$;
\item $h_{n-1} < M_n^{1}, M_n^2 < h_n$;
\item $h_n\mbox{ sufficiently large such that equation \ref{SwitchingSetInd} holds}$;
\item $h_{n+1}\epsilon_n w_n > H_{m_n}+1$;
\item $\epsilon_{n+1} (H_{m_n}+1) < \epsilon_n w_n$;
\item $H_{m_{n+1}}\geq H_{m_n}$.
\end{enumerate}
If $r_n={1}/{2(n+2)}$ and $s_n=1/2$, and $\epsilon_n$ is sufficiently 
small such that Lemma \ref{CanParLem} holds, then we 
have a canonical towerplex construction.

\end{document}